\newtheorem{theorem}{Theorem}[section]
\newtheorem{corollary}[theorem]{Corollary}
\newtheorem{example}[theorem]{Example}
\newtheorem{proposition}[theorem]{Proposition}
\newtheorem{definition}[theorem]{Definition}
\numberwithin{equation}{section}
\newcommand{\fS}{\mathfrak{S}}
\newcommand{\NSym}{\mathrm{NSym}}
\newcommand{\NCSym}{\mathrm{NCSym}}
\newcommand{\QSym}{\mathrm{QSym}}
\newcommand{\Sym}{\mathrm{Sym}}
\newcommand{\rmO}{\mathrm{O}}
\newcommand{\rmT}{\mathrm{T}}
\newcommand{\rmS}{\mathrm{S}}
\newcommand{\cV}{\mathcal{V}}
\newcommand{\sfS}{\mathsf{S}}
\newcommand{\Sn}{\mathfrak{S}}
\newcommand{\rmE}{\mathrm{E}}
\newcommand{\OtE}{\mathrm{OtE}}
\newcommand{\EtO}{\mathrm{EtO}}
\newcommand{\img}{\mathrm{img}}
\title{Extending the descent-to-peak map and its applications}
\author{Farid Aliniaeifard}
\address{
	Research Center for Mathematics and Interdisciplinary Sciences, Shandong University \\
	Frontiers Science Center for Nonlinear Expectations, Ministry of Education \\
	Qingdao, Shandong, 266237, P. R. China}
\email{farid@sdu.edu.cn}
\author{Shu Xiao Li}
\address{
	Research Center for Mathematics and Interdisciplinary Sciences, Shandong University \\
	Frontiers Science Center for Nonlinear Expectations, Ministry of Education \\
	Qingdao, Shandong, 266237, P. R. China}
\email{lishuxiao@sdu.edu.cn}
\thanks{Both authors were supported in part by the Provincial Nature Science Foundation of Shandong, Project No. ZR2024QA026 and the Fundamental Research Funds for the Central Universities.}
\subjclass[2020]{05E05, 05E10, 16T30, 20C30}
\keywords{quasisymmetric functions, peak algebra, descent-to-peak map}
\date{}
\begin{document}

\maketitle

\begin{abstract}
The descent-to-peak map serves as a bridge between algebra and combinatorics. We use it as a tool for proving the equidistribution of peak and valley sets of standard Young tableaux with a very short argument. We also introduce a new shuffle basis of quasisymmetric functions whose elements are eigenvectors of the descent-to-peak map. Using this basis, we then extend the notion of the peak algebra and of the descent-to-peak map to shuffle, tensor, and symmetric algebras.  
	 
\end{abstract}

\tableofcontents

\section{Introduction}

The tools that help us translate or transfer results from one branch of mathematics to another are indispensable. One such tool is the descent-to-peak map, which we use to give a very short proof of the main result in \cite{Z24}. Extending this map to other Hopf algebras could serve as a powerful method for further connecting algebra and combinatorics. Moreover, the image of the descent-to-peak map is called the peak algebra $\Pi$, originally defined by Stembridge using enriched $P$-partitions. To extend this notion to other Hopf algebras, we need a new basis that not only includes the eigenvectors of the descent-to-peak map but also exhibits nice combinatorial properties. We construct such a basis.

A special family of symmetric functions, known as the Schur's $Q$ functions, indexed by odd partitions, are introduced in \cite{S11} to study the projective representations of symmetric and alternating groups. Combinatorially, the Schur's $Q$ functions are equipped with a theory of shifted tableaux, including RSK correspondence, Littlewood-Richardson rule and jeu de taquin \cite{S87, S89, W84}. Moreover, the subspace spanned by Schur's $Q$ functions, $\Omega$, forms a self-dual Hopf algebra \cite{M95}.

Using the combinatorics of enriched $P$-partitions, the Schur's $Q$ functions are generalized to the peak algebra $\Pi$, indexed by odd compositions, in the fundamental paper \cite{S97}. The peak algebra and its graded dual have triggered a variety amount of studies. For instance, it is shown that the peak algebra corresponds to the representation of $0$-Hecke-Clifford algebra. Also, the dual peak algebra is a Hopf ideal in noncommutative symmetric functions with respect to the internal product. See \cite{B03, B02, B04, B14, H07, N03} for a small portion.


There are already many trials to generalize peak algebra in the literature. Most of these generalizations are combinatorial through enriched $P$-partition and peak sets. Notably, in \cite{J15}, the peak algebra is generalized to the Poirier-Reutenauer Hopf algebra of standard Young tableaux, which is introduced in \cite{P95}, and the authors also extend the notion of the peak algebra to noncommuting variables \cite{A25}. Other generalizations can be found in \cite{A04, B06, H06}.

In this paper, we extend the peak algebra from an algebraic point of view. As observed in \cite{A06}, there is another description of $\Omega$ and $\Pi$, given by character theory of combinatorial Hopf algebras. The Hopf algebra of quasisymmetric functions $\QSym$ and the canonical character
$$\begin{array}{cccc}
\zeta_\QSym: & \QSym & \to & \mathbb{C}\\
& f(x_1,x_2,\dots) & \mapsto & f(1,0,0,\dots)
\end{array}$$
give the terminal object in the category of combinatorial Hopf algebras. The \emph{descent-to-peak} map $\Theta_\QSym$ is the unique Hopf morphism that makes the following diagram commute.
\begin{center}
	\begin{tikzpicture}
	\node (Q1) at (0,0) {$\QSym$};
	\node (Q2) at (3,0) {$\QSym$};
	\node (Q) at (1.5,-1.5) {$\mathbb{C}$};
	
	\draw[black, ->] (Q1) to (Q2);
	\draw[black, ->] (Q1) to (Q);
	\draw[black, ->] (Q2) to (Q);
	
	\node at (1.5,0.3) {$\Theta_\QSym$};
	\node at (2.8,-1) {$\zeta_\QSym$};
	\node at (-0.2,-1) {$\overline{\zeta_\QSym^{-1}}\zeta_\QSym$};
	\end{tikzpicture}
\end{center}
The image of $\Theta_\QSym$ is exactly $\Pi$. Similarly, $\Omega$ can be constructed as the image of $\Theta_\Sym$.

For any combinatorial Hopf algebra $\mathsf{H}$ with a morphism $\Phi:\mathsf{H}\to\QSym$, we define a \emph{theta map} for $\mathsf{H}$ to be a graded Hopf morphism making the following diagram commute. 
$$
\begin{tikzpicture}
\node(TV) at (0,0){$\mathsf{H}$};
\node(Sym) at (4,0){$\QSym$};
\node(OV) at (0,-2){$\mathsf{H}$};
\node(Omega) at (4,-2){$\QSym$};

\draw[thick,->]  (TV)->(Sym);
\draw[thick,->]  (OV)->(Omega);
\draw[thick,->]   (TV)->(OV);
\draw[thick,->]   (Sym)->(Omega);

\node(Phi) at (2,0.2){$\Phi$};
\node(Phi) at (2,-1.8){$\Phi$};
\node(thetaV) at (-0.3,-1){$\Theta$};
\node(thetaV) at (4.7,-1){$\Theta_\QSym$};
\end{tikzpicture} 
$$
Therefore, the descent-to-peak map of $\QSym$ is a theta map of $\QSym$,  $\Theta_{\QSym}$. We refer to the descent-to-peak map of $\QSym$ as the theta map of $\QSym$. The image $\Theta(\mathsf{H})$ is called a peak algebra for $\mathsf{H}$. In order to find the theta maps, we introduce a shuffle basis $\{S_\alpha\}$ of $\QSym$, indexed by compositions. The elements of this basis satisfy three properties:
\begin{enumerate}
	\item Their product and coproduct are the shuffle product and the deconcatenation coproduct of compositions, respectively.
	\item They are eigenvectors of $\Theta_\QSym$.
	\item $\{S_\alpha: \text{all parts of $\alpha$ are odd\}}$ is a basis for $\Pi$.
\end{enumerate}
Using this new basis and its dual basis, we are able to find explicit theta maps and peak algebras for shuffle algebras, tensor algebras, and symmetric algebras. The symmetric functions in noncommuting variables, $\NCSym$, can be viewed as a tensor algebra, as a consequence of \cite{L11}, which finds a set of free generators of $\NCSym$ that are also primitive. Our construction results in a peak algebra for $\NCSym$ indexed by set partitions whose parts all have odd size. In \cite{A25}, the authors also construct a peak algebra of $\NCSym$ combinatorially through non-commutative enriched $P$-partitions, which is different from the one in this paper. 

There is another large family of shuffle and tensor algebras that our construction can be applied to. As shown in \cite{A05}, for any Hopf algebra $\mathsf{H}$ that is graded as a coalgebra, the associated graded Hopf algebra, $\mathrm{gr}(\mathsf{H})$, is a shuffle algebra. The graded dual of $\mathrm{gr}(\mathsf{H})$ is a tensor algebra. We use the graded associated Hopf algebra on permutations, defined in \cite{L21,V14}, as an example and describe a theta map and peak algebra for it.

This paper is organized as follows. Section \ref{sec:prelim} gives necessary backgound information on the Hopf algebras symmetric functions and quasisymmetric functions, together with character theory of combinatorial Hopf algebras and precise definitions of theta maps. Our shuffle basis is introduced and studied in Section \ref{sec:basis}. In Section \ref{sec:main}, we give a very short proof of the main result in \cite{Z24} using the descent-to-peak map.  In Section \ref{sec:Hopfs}, we revisit the shuffle, tensor and symmetric algebra. We then define candidates of theta maps and give explicit formula. The Hopf subalgebras of odd elements are introduced as the peak algebras. And lastly, Section \ref{sec:perms} gives examples of theta maps and peak algebras, including the symmetric functions in noncommuting variables, and the graded associated Hopf algebra on permutations.

\section{Preliminaries}\label{sec:prelim}

In this section we present the Hopf algebras $\Sym, \QSym, \NSym$, together with the character theory of combinatorial Hopf algebras. For more details about Hopf algebras we refer the readers to \cite{G18}. The base field is set to be $\mathbb{C}$ for convenience.

\subsection{The Hopf algebras $\QSym, \Sym$ and $\NSym$}

A {\it composition} $\alpha$ of a positive integer $n$, written $\alpha \models n$, is a finite ordered list of positive integers $(\alpha_1,\ldots,\alpha_\ell)$ where $\alpha_1+\dots+\alpha_\ell=n$. We let $\ell(\alpha)=\ell$ be the number of parts of $\alpha$ and it is called the {\it length} of $\alpha$. 
When there is no confusion, we sometimes write $\alpha$ as a word $\alpha_1\alpha_2\cdots\alpha_{\ell(\alpha)}$. The {\it concatenation} of $\alpha=(\alpha_1,\ldots,\alpha_{\ell(\alpha)}) \models n$ and $\beta=(\beta_1,\ldots,\beta_{\ell(\beta)})\models m$  is the composition $\alpha  \beta=(\alpha_1,\ldots,\alpha_{\ell(\alpha)},\beta_1,\ldots,\beta_{\ell(\beta)})$.

For a positive integer $n$, let $[n]:=\{1,2,\ldots,n \}$. There is a one-to-one correspondence $\mathcal{I}$ between the set of all compositions of $n$ and the set of all subsets of $[n-1]$ such that  for $\alpha \models n$,
$$\mathcal{I}(\alpha)=\{ \alpha_1,\alpha_1+\alpha_2, \ldots,\alpha_1+\cdots+\alpha_{\ell(\alpha)-1} \}.$$
Let $\alpha,\beta\models n$. We write $\beta \leq \alpha$, or $\alpha$ refines $\beta$ if $\mathcal{I}(\beta)\subseteq \mathcal{I}(\alpha)$.\\

Let $\Sn_n$ denote the set of permutations of $\{1,2,\dots,n\}$. Let ${\rm Sh}_{n,m}$ be the following subset of the permutation group ${\Sn}_{n+m}$,
$${\rm Sh}_{n,m}=\{\sigma\in {S}_{n+m}:\sigma^{-1}(1)<\sigma^{-1}(2)<\cdots<\sigma^{-1}(n);\sigma^{-1}(n+1)<\cdots<\sigma^{-1}(n+m)\}.$$
For compositions $\alpha$ and $\beta$ with lengths $n$ and $m$, respectively, we define $\alpha\shuffle \beta$ to be the multi-set $$\{(\gamma_{\sigma(1)},\gamma_{\sigma(2)},\dots,\gamma_{\sigma(n+m)}):\sigma\in {\rm Sh}_{n,m}\}$$ where $\gamma=(\gamma_1,\gamma_2,\dots,\gamma_{n+m})$ is the concatenation $\alpha\beta$.

The quasishuffle $\alpha\overline{\shuffle}\beta$ is the multi-set of compositions $(\delta_1,\dots,\delta_\ell)$ such that
\begin{enumerate}
	\item $(\delta_1,\dots,\delta_\ell)\leq(\gamma_1,\dots,\gamma_{n+m})$ for some $(\gamma_1,\dots,\gamma_{n+m})\in\alpha\shuffle\beta$,
	\item either $\delta_i=\gamma_j$ for some $j$, or $\delta_i=\gamma_j+\gamma_{j+1}$ where $\gamma_j$ is a part of $\alpha$ and $\gamma_{j+1}$ is a part of $\beta$.
\end{enumerate}

For a composition $\alpha$ of $n$, the {\it monomial quasisymmetric function} $M_{\alpha}$ is
$$M_\alpha:=\sum_{i_1<\ldots<i_{\ell(\alpha)}  }x_{i_1}^{\alpha_1} \cdots x_{i_{\ell(\alpha)}}^{\alpha_{\ell(\alpha)}}$$ 
which is  an element of the  algebra of  bounded-degree formal power series in commutative variables
$\{x_i\}_{i\geq 1}$. By convention, $M_{()}= 1$ where $()$ is the unique composition of $0$.
The vector space of {\it quasisymmetric functions} is denoted by $\QSym$ and is defined as
$$\QSym=\bigoplus_{n\geq 0} \QSym_n$$
where
$$\QSym_n=\mathbb{C}\text{-span}\{ M_\alpha: \alpha \models n \}.$$
The space of quasisymmetric functions is indeed a Hopf algebra where its product is given by the quasishuffle
$$M_\alpha\cdot M_\beta=\sum_{\gamma\in\alpha\overline{\shuffle}\beta}M_\gamma.$$
and its coproduct is given by deconcatenation
$$\Delta(M_\alpha)=\sum_{\beta \gamma=\alpha}M_\beta \otimes M_\gamma.$$ 

A special subspace of $\QSym$ is the Hopf algebra of {\it symmetric functions}, $\Sym$. The space $\Sym$ is (commutatively) freely generated by the {\it scaled power sum functions} defined as
$$p_n=\frac{1}{n}M_{n}.$$
The scaled power sum functions are primitive elements i.e. for $n\geq 1$, $\Delta(p_n)=1\otimes p_n+p_n\otimes 1$. For a composition $\alpha$, we write $p_\alpha=p_{\alpha_1}\cdots p_{\alpha_{\ell(\alpha)}}$. In particular, $\Sym$ is both commutative and cocommutative.

The complete homogeneous functions of $\Sym$ are defined as
$$h_n=\sum_{\alpha\models n}M_\alpha$$
and we write $h_{\alpha}=h_{\alpha_1}\cdots h_{\alpha_{\ell(\alpha)}}$.

There is a well-known expansion of $p_n$ in terms of the complete homogeneous functions, cf. \cite[Ex. 5.43]{G18}, given by
$$p_n=\frac{1}{n}\sum_{\alpha\models n}(-1)^{\ell(\alpha)-1}\alpha_{\ell(\alpha)}h_\alpha.$$

The Hopf algebra of {\it non-commutative symmetric functions}, $\NSym$, originally defined in \cite{G95}, is the graded dual of $\QSym$. Let $\{H_\alpha\}$ be the basis, indexed by compositions, of $\NSym$ dual to the basis $\{ M_\alpha\}$ of $\QSym$. Thus, We have the Hopf pairing $\langle H_\alpha,M_\beta\rangle=\delta_{\alpha,\beta}$ where $\delta_{\alpha,\beta}$ is the Kronecker delta.

Equivalently, $\NSym = \langle H_1, H_2, \cdots \rangle$ is the non-commutative algebra freely generated by the set $\{H_n\}_{n\geq1}$ with coproduct defined, on generators and extended linearly and multiplicatively, as
$$\Delta(H_n)=\sum_{i+j=n}H_i\otimes H_j.$$

For a composition $\alpha$, we write $H_\alpha=H_{\alpha_1}\cdots H_{\alpha_{\ell(\alpha)}}$. The Hopf algebra $\NSym$ is graded with $\deg(H_n)=n$. A linear basis for the homogeneous component $\NSym_n$ of degree $n$ is $\{H_\alpha:\alpha\models n\}$.

It is well-known that $\Sym$ is self-dual via the isomorphism defined on generators as
$$\begin{array}{cccc}
	I:&\Sym^*&\rightarrow& \Sym\\
	&m_n^*& \mapsto & h_n.
\end{array}$$
It can be shown, via the Cauchy identity, that $I(p_n^*)=np_n$. Let $\iota:\Sym\to\QSym$ be the natural embedding. There is a forgetful projection $\pi:\NSym\to\Sym$ defined on generators as $H_n\mapsto h_n$. The map $\pi$ is a graded Hopf morphism and is also the map composition $I^{-1}\circ\iota^*$ where $\iota^*:\NSym\to\Sym^*$ is the adjoint map of $\iota$ i.e. $\pi=I\circ\iota^*=(\iota\circ I)^*$.

\subsection{Combinatorial Hopf algebras}
A \emph{character} for a Hopf algebra is a linear and multiplicative map $\zeta:\mathsf{H}\to\mathbb{C}$. A \emph{combinatorial Hopf algebra} is a pair $(\mathsf{H}, \zeta)$ where $\mathsf{H}$ is a graded connected Hopf algebra and $\zeta$ is a character for $\mathsf{H}$. The set of characters $\rm{Hom}(\mathsf{H},\mathbb{C})$ forms a group under \emph{convolution product}
$$\zeta\zeta' = m\circ(\zeta\otimes\zeta')\circ\Delta.$$
The identity element of the group $\mathrm{Hom}(\mathsf{H},\mathbb{C})$ is the counit $\epsilon_{\mathsf{H}}$ and inverse element is given by $\zeta^{-1}=\zeta\circ \mathcal{S}_{\mathsf{H}}$ where $\mathcal{S}_{\mathsf{H}}$ is the antipode of $\mathsf{H}$. We define $\bar{\zeta}$ to be the character such that $\bar{\zeta}|_{\mathsf{H}_n}=(-1)^n\zeta|_{\mathsf{H}_n}$. 

A {\it morphism of combinatorial Hopf algebras} $\Phi: (\mathsf{H},\zeta_{\mathsf{H}})\rightarrow (\mathsf{H}^{'},\zeta_{\mathsf{H}^{'}})$, is a graded Hopf morphism $\Phi:\mathsf{H}\to\mathsf{H}'$ such that $\zeta_{\mathsf{H}^{'}}\circ \Phi=\zeta_{\mathsf{H}}$. The character theory of Hopf algebras has been studied by Aguiar, Bergeron, and Sottile in \cite{A06}.

Let $\zeta_\QSym$ be the character of $\QSym$ given by

$$\zeta_\QSym(f(x_1,x_2,\dots))=f(1,0,0,0,\dots).$$

\begin{example}
	We have $\zeta_\QSym(p_n)=1$ and $\zeta_\QSym(M_\alpha)=\left\{\begin{array}{cc}
		1 & \text{if }\ell(\alpha)=0\text{ or }1\\
		0 & \text{otherwise.}
		\end{array}\right.$
\end{example}

Let $\zeta_\Sym$ be the restricted character $\zeta_\QSym|_\Sym$. The characters $\zeta_\QSym$ and $\zeta_\Sym$ are canonical characters in the following sense.

\begin{theorem}{\rm \label{thm:ABSmain}\cite[Theorem 4.1]{A06}}
	For any combinatorial Hopf algebra $(\mathsf{H},\zeta)$, there exists a unique morphism of combinatorial Hopf algebras
	$$\Phi: (\mathsf{H},\zeta)\rightarrow (\QSym,\zeta_{\QSym}).$$
	Moreover, if $\mathsf{H}$ is cocommutative, then $\Phi(\mathsf{H})\subseteq\Sym$.
\end{theorem}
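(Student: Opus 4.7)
The plan is to construct $\Phi$ explicitly via iterated coproducts and $\zeta$, verify it is a graded Hopf morphism with $\zeta_\QSym \circ \Phi = \zeta$, and then show that any such morphism must equal this one. For $h \in \mathsf{H}_n$ and a composition $\alpha = (\alpha_1, \ldots, \alpha_k) \models n$, I define
$$\zeta_\alpha(h) \;=\; \zeta^{\otimes k}\bigl(\pi_\alpha \circ \Delta^{(k-1)}(h)\bigr),$$
where $\pi_\alpha$ is the projection onto the summand $\mathsf{H}_{\alpha_1} \otimes \cdots \otimes \mathsf{H}_{\alpha_k}$ of $\mathsf{H}^{\otimes k}$, and set $\Phi(h) = \sum_{\alpha \models n} \zeta_\alpha(h)\, M_\alpha$, with $\Phi(1_\mathsf{H}) = 1$.

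Next I would verify the required properties. Grading is immediate. Comultiplicativity reduces, via coassociativity $\Delta^{(k+l-1)} = (\Delta^{(k-1)} \otimes \Delta^{(l-1)}) \circ \Delta$, to the observation that the coefficient of $M_\beta \otimes M_\gamma$ in $\Delta_\QSym(\Phi(h))$ is $\zeta_{\beta\gamma}(h)$ (by the deconcatenation coproduct of $M_\alpha$), which matches $\sum_{(h)} \zeta_\beta(h_{(1)})\zeta_\gamma(h_{(2)})$ appearing in $(\Phi \otimes \Phi)\Delta_\mathsf{H}(h)$. Multiplicativity uses the Hopf compatibility $\Delta^{(k-1)}(gh) = \Delta^{(k-1)}(g) \cdot \Delta^{(k-1)}(h)$ componentwise, combined with multiplicativity of $\zeta$, and matches the merges of multi-degree decompositions with the quasi-shuffle formula $M_\beta \cdot M_\gamma = \sum_{\delta \in \beta \,\overline{\shuffle}\, \gamma} M_\delta$. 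The character condition $\zeta_\QSym(\Phi(h)) = \zeta(h)$ is immediate because $\zeta_\QSym(M_\alpha)$ vanishes unless $\ell(\alpha) \le 1$, so on $\mathsf{H}_n$ only the $\alpha = (n)$ term survives, giving $\zeta_{(n)}(h) = \zeta(h)$.

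For uniqueness I would use the following extraction principle: for $f \in \QSym_n$ and $\alpha \models n$ with $\ell(\alpha) = k$, the coefficient of $M_\alpha$ in $f$ equals $\zeta_\QSym^{\otimes k}\bigl(\pi_\alpha \circ \Delta^{(k-1)}(f)\bigr)$. This holds because in $\Delta^{(k-1)}(M_\beta)$ only tensor factors of the form $M_{(m)}$ or $M_{()}$ survive $\zeta_\QSym^{\otimes k}$, and imposing multi-degree $\alpha$ with all $\alpha_i>0$ uniquely selects the summand forcing $\beta = \alpha$. If $\Phi'$ is any morphism as in the theorem, applying the extraction to $\Phi'(h)$ and using that $\Phi'$ intertwines iterated coproducts and satisfies $\zeta_\QSym \circ \Phi' = \zeta$ forces the coefficient of $M_\alpha$ in $\Phi'(h)$ to equal $\zeta_\alpha(h)$, hence $\Phi' = \Phi$.

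For the cocommutative statement, cocommutativity of $\mathsf{H}$ makes $\Delta^{(k-1)}(h)$ invariant under any permutation of the $k$ tensor factors, so $\zeta_\alpha(h) = \zeta_{\alpha'}(h)$ whenever $\alpha, \alpha'$ are rearrangements of one another. Then $\Phi(h)$ is a linear combination of monomial symmetric functions $m_\lambda = \sum_{\alpha \sim \lambda} M_\alpha \in \Sym$, yielding $\Phi(\mathsf{H}) \subseteq \Sym$. The main obstacle will be the multiplicativity step: tracking how the Hopf compatibility axiom interleaves tensor factors and bijectively identifying the resulting merges with elements of $\beta \,\overline{\shuffle}\, \gamma$ requires careful combinatorial bookkeeping, whereas the remaining pieces (grading, coassociativity, character condition, uniqueness, cocommutative reduction) all follow quite mechanically from the definition.
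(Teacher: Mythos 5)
The paper does not prove this theorem; it is quoted verbatim from Aguiar--Bergeron--Sottile \cite[Theorem~4.1]{A06}. Your proposal correctly reconstructs the original argument from that source: the explicit formula $\Phi(h)=\sum_{\alpha\models n}\zeta_\alpha(h)M_\alpha$ with $\zeta_\alpha$ built from iterated coproducts, the extraction of $M_\alpha$-coefficients via $\zeta_\QSym^{\otimes k}\circ\pi_\alpha\circ\Delta^{(k-1)}$ for uniqueness, and the symmetrization of $\zeta_\alpha$ under cocommutativity are all exactly the steps of the standard proof, and each is sound as outlined.
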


The \textit{odd and even Hopf subalgebras} of a combinatorial Hopf algebra $(\mathsf{H},\zeta_\mathsf{H})$, denoted by $\sfS_-(\mathsf{H},\zeta_\mathsf{H})$ and $\sfS_+(\mathsf{H},\zeta_\mathsf{H})$, respectively, are defined as the largest sub-coalgebra contained in $\ker\left(\overline{\zeta_{\mathsf{H}}^{-1}}-\zeta_{\mathsf{H}}\right)$ and $\ker\left(\overline{\zeta_{\mathsf{H}}}-\zeta_{\mathsf{H}}\right)$, respectively.

We will be using the following important properties of the odd and even Hopf subalgebras.

\begin{theorem}{\rm \label{thm:ABSodd}\cite[Theorem 5.3]{A06}}
	The coalgebras $\sfS_-(\mathsf{H},\zeta_\mathsf{H})$ and $\sfS_+(\mathsf{H},\zeta_\mathsf{H})$ are graded Hopf sub-algebras of $\mathsf{H}$. Moreover, a homogeneous element $h\in\mathsf{H}$ belongs to $\sfS_-(\mathsf{H},\zeta_\mathsf{H})$ (or $\sfS_+(\mathsf{H},\zeta_\mathsf{H})$) if and only if $(\rm{id}\otimes \left(\overline{\zeta_{\mathsf{H}}^{-1}}-\zeta_{\mathsf{H}}\right)\otimes\rm{id})\circ \Delta^{(2)}(h)=0$ (or $(\rm{id}\otimes \left(\overline{\zeta_{\mathsf{H}}}-\zeta_{\mathsf{H}}\right)\otimes\rm{id})\circ \Delta^{(2)}(h)=0$, respectively).
\end{theorem}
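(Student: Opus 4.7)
The theorem breaks into the structural claim that $\sfS_\pm$ are graded Hopf sub-algebras, and the operational characterization in terms of $\Delta^{(2)}$. I would prove the characterization first, as it converts $\sfS_\pm$ into a concrete kernel and supplies the tool needed for the rest. I focus on the odd case, abbreviating $\chi := \overline{\zeta_{\mathsf{H}}^{-1}} - \zeta_\mathsf{H}$; the even case $\chi := \overline{\zeta_\mathsf{H}} - \zeta_\mathsf{H}$ is entirely parallel.

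Let $V := \{h \in \mathsf{H} : (\mathrm{id} \otimes \chi \otimes \mathrm{id})\Delta^{(2)}(h) = 0\}$. The inclusion $\sfS_- \subseteq V$ is immediate: if $h$ lies in a sub-coalgebra $C \subseteq \ker\chi$, then $\Delta^{(2)}(h) \in C^{\otimes 3}$ and the middle tensor factor is killed by $\chi$. For the reverse, I would show that for any $h \in V$ the subcoalgebra generated by $h$—which is spanned by elements of the form $(\phi_1 \otimes \mathrm{id} \otimes \phi_2)\Delta^{(2)}(h)$ with $\phi_i \in \mathsf{H}^*$—is contained in $\ker\chi$; indeed
$$\chi\bigl((\phi_1 \otimes \mathrm{id} \otimes \phi_2)\Delta^{(2)}(h)\bigr) = (\phi_1 \otimes \phi_2)\bigl[(\mathrm{id} \otimes \chi \otimes \mathrm{id})\Delta^{(2)}(h)\bigr] = 0,$$
so maximality of $\sfS_-$ forces $h \in \sfS_-$, yielding $V = \sfS_-$.

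Closure under product rests on the derivation-like identity valid for a difference of characters,
$$\chi(ab) = \chi(a)\,\overline{\zeta_\mathsf{H}^{-1}}(b) + \zeta_\mathsf{H}(a)\,\chi(b),$$
combined with $\Delta^{(2)}$ being an algebra morphism. For $h, k \in V$, applying $\mathrm{id} \otimes \chi \otimes \mathrm{id}$ to $\Delta^{(2)}(hk) = \Delta^{(2)}(h)\cdot\Delta^{(2)}(k)$ and using the identity on the central factor produces two summands in $\mathsf{H} \otimes \mathsf{H}$, each containing $(\mathrm{id} \otimes \chi \otimes \mathrm{id})\Delta^{(2)}(h) = 0$ or $(\mathrm{id} \otimes \chi \otimes \mathrm{id})\Delta^{(2)}(k) = 0$ as a componentwise factor. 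Once gradedness of $V$ is established, closure under the antipode comes for free: any graded sub-bialgebra of a graded connected Hopf algebra is a Hopf sub-algebra by Takeuchi's lemma.

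The principal obstacle is gradedness. Since $\chi$ is nonzero on $\mathsf{H}_n$ for $n \geq 2$, $\ker\chi$ is not stable under the homogeneous projections $\pi_n$, and the defining condition of $V$ does not manifestly respect grading. My plan is to first observe that the subcoalgebra generated by a homogeneous element is itself graded (since $\Delta^{(2)}$ is tri-graded), and thereby show that every homogeneous element of $V$ lies in the largest graded sub-coalgebra $\sfS_-^{\mathrm{gr}} \subseteq \ker\chi$. It remains to deduce that $\sfS_- = \sfS_-^{\mathrm{gr}}$, which requires showing that the non-homogeneous elements of $\sfS_-$ decompose into homogeneous pieces still in $\sfS_-$. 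Here I would exploit the vanishing $\chi(1) = 0$ and $\chi|_{\mathsf{H}_1} = 0$—the latter since primitives satisfy $\zeta_\mathsf{H}^{-1}(h) = -\zeta_\mathsf{H}(h)$—together with the tri-graded decomposition of $\Delta^{(2)}$, and run a descending induction on the top nonzero degree to successively peel off homogeneous components.
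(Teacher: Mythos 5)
The paper does not actually prove this statement---it is quoted verbatim from Aguiar--Bergeron--Sottile \cite[Theorem 5.3]{A06}---so I am measuring your proposal against the standard argument there. Most of it is correct and is essentially that argument: the two inclusions between $\sfS_-$ and $V$ (using that the subcoalgebra generated by $h$ is $\mathrm{span}\{(\phi_1\otimes\mathrm{id}\otimes\phi_2)\Delta^{(2)}(h)\}$), the closure of $V$ under products via the twisted-derivation identity $\chi(ab)=\chi(a)\,\overline{\zeta_{\mathsf{H}}^{-1}}(b)+\zeta_{\mathsf{H}}(a)\,\chi(b)$ for the difference of two characters, and the antipode closure via Takeuchi for graded connected bialgebras.

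The genuine gap is in your final step, and it is not merely a missing detail: the claim you plan to establish there is false. You intend to show $\sfS_-=\sfS_-^{\mathrm{gr}}$, i.e.\ that the largest subcoalgebra contained in $\ker\chi$ is graded, by peeling off homogeneous components by descending induction. Consider $\mathsf{H}=\QSym$ and $h=S_4-S_2=M_4-M_2$. Each $M_n$ is primitive, so $\mathcal{S}(M_n)=-M_n$ and $\overline{\zeta_\QSym^{-1}}(M_n)=(-1)^n\cdot(-1)=-1$ for $n$ even, giving $\chi(M_2)=\chi(M_4)=-2$. Hence $h$ is a primitive element with $\chi(h)=0$, so $\mathbb{C}1\oplus\mathbb{C}h$ is a subcoalgebra contained in $\ker\chi$, and $h$ lies in the largest subcoalgebra contained in $\ker\chi$; yet its homogeneous components $M_4$ and $-M_2$ are not even in $\ker\chi$. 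So, with the literal definition used in this paper (``largest sub-coalgebra contained in $\ker\chi$''), $\sfS_-$ strictly contains $\sfS_-^{\mathrm{gr}}$ and is \emph{not} graded; no induction can peel off components. The resolution is definitional: in \cite{A06} the odd and even subalgebras are the largest \emph{graded} subcoalgebras contained in the kernel (equivalently, the largest subcoalgebra contained in the graded subspace $\bigcap_n\ker(\chi|_{\mathsf{H}_n})$), which is precisely why the $\Delta^{(2)}$-criterion in the theorem is stated only for homogeneous $h$. Once you adopt that reading, the observation you already make---that the subcoalgebra generated by a homogeneous $h\in V$ is graded and lands in $\ker\chi$---finishes the proof, and the proposed peeling induction is neither needed nor available.
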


\subsection{Peak algebras and the theta maps}

Considering the combinatorial Hopf algebra  $(\QSym,\overline{\zeta_\QSym^{-1}}\zeta_\QSym)$, let $\Theta_\QSym$ be the unique combinatorial Hopf morphism
$$\Theta_\QSym:(\QSym,\overline{\zeta_\QSym^{-1}}\zeta_\QSym)\to(\QSym,\zeta_\QSym).$$
The map $\Theta_\QSym$ is originally studied by Stembridge using enriched $P$-partitions \cite{S97} and reconstructed in \cite{A06}. The image of $\Theta_\QSym$ is known as the {\it peak algebra} of $\QSym$, denoted by $\Pi$.

Let $\Theta_\Sym$ be the unique combinatorial Hopf morphism
$$\Theta_\Sym:(\Sym,\overline{\zeta_\Sym^{-1}}\zeta_\Sym)\to(\Sym,\zeta_\Sym).$$
The map $\Theta_{\Sym}=\Theta_{\QSym}|_{\Sym}$, and moreover, it maps $h_n$ to $q_n$ where $q_n$ is the Schur's $Q$ function cf. \cite{M95, S11}. It can be shown that $\Theta_\Sym$ can be equivalently defined on scaled power sum functions, and extended multiplicatively, as
$$\Theta_\Sym(p_n)=\left\{\begin{array}{cc}
2p_n & \text{if }n \text{ is odd}\\
0& \text{otherwise.}
\end{array}\right.$$
The image of $\Theta_\Sym$ is known as the space of Schur's $Q$ functions, and denoted by $\Omega$.

The images $\Pi$ and $\Omega$ are precisely the odd Hopf subalgebras $\sfS_{-}(\QSym,\zeta_\QSym)$ and $\sfS_{-}(\Sym,\zeta_\Sym)$, respectively.

In this paper, we attempt to extend the notion of $\Theta_\QSym$ and peak algebras to other combinatorial Hopf algebras, in the following sense.

\begin{definition}
	Let $(\mathsf{H},\zeta)$ be a combinatorial Hopf algebra, and let $\Phi:(\mathsf{H},\zeta)\to(\QSym,\zeta_\QSym)$ be the unique combinatorial Hopf morphism. A {\it theta map} for $(\mathsf{H},\zeta)$ is a map $\Theta:\mathsf{H}\to\mathsf{H}$ that makes the following diagram commutes
	
	$$
	\begin{tikzpicture}
	\node(TV) at (0,0){$\mathsf{H}$};
	\node(Sym) at (4,0){$\QSym$};
	\node(OV) at (0,-2){$\mathsf{H}$};
	\node(Omega) at (4,-2){$\QSym$};
	
	\draw[thick,->]  (TV)->(Sym);
	\draw[thick,->]  (OV)->(Omega);
	\draw[thick,->]   (TV)->(OV);
	\draw[thick,->]   (Sym)->(Omega);
	
	\node(Phi) at (2,0.2){$\Phi$};
	\node(Phi) at (2,-1.8){$\Phi$};
	\node(thetaV) at (-0.3,-1){$\Theta$};
	\node(thetaV) at (4.7,-1){$\Theta_\QSym$};
	\end{tikzpicture} 
	$$
	
	The image ${\rm img}(\Theta)$ is called a peak algebra for $(\mathsf{H},\zeta)$.
\end{definition}

Since $\img(\Theta_\QSym)=\Pi$, we sometimes use the alternative diagram
$$
\begin{tikzpicture}
\node(TV) at (0,0){$\mathsf{H}$};
\node(Sym) at (4,0){$\QSym$};
\node(OV) at (0,-2){$\Theta(\mathsf{H})$};
\node(Omega) at (4,-2){$\Pi$};

\draw[thick,->]  (TV)->(Sym);
\draw[thick,->]  (OV)->(Omega);
\draw[thick,->]   (TV)->(OV);
\draw[thick,->]   (Sym)->(Omega);

\node(Phi) at (2,0.2){$\Phi$};
\node(Phi) at (2,-1.8){$\Phi$};
\node(thetaV) at (-0.3,-1){$\Theta$};
\node(thetaV) at (4.7,-1){$\Theta_\QSym$};
\end{tikzpicture} 
$$
When $\mathsf{H}$ is cocommutative, by Theorem \ref{thm:ABSmain}, we have $\Phi(\mathsf{H})\subseteq\Sym$ and hence the following diagram is commutative. 
$$
\begin{tikzpicture}
\node(TV) at (0,0){$\mathsf{H}$};
\node(Sym) at (4,0){$\Sym$};
\node(OV) at (0,-2){$\Theta(\mathsf{H})$};
\node(Omega) at (4,-2){$\Omega$};

\draw[thick,->]  (TV)->(Sym);
\draw[thick,->]  (OV)->(Omega);
\draw[thick,->]   (TV)->(OV);
\draw[thick,->]   (Sym)->(Omega);

\node(Phi) at (2,0.2){$\Phi$};
\node(Phi) at (2,-1.8){$\Phi$};
\node(thetaV) at (-0.3,-1){$\Theta$};
\node(thetaV) at (4.6,-1){$\Theta_\Sym$};
\end{tikzpicture} 
$$
In particular, $\Pi$ and $\Omega$ are peak algebras for $(\QSym,\zeta_\QSym)$ and $(\Sym,\zeta_\Sym)$, respectively. Additionally, $\Theta_\QSym$ and $\Theta_\Sym$ are theta maps for $(\QSym,\zeta_\QSym)$ and $(\Sym,\zeta_\Sym)$, respectively.

\section{The peak and valley statistics are equidistributed}\label{sec:main}

In this section, we give a very short proof for the main result in \cite{Z24}. 

Let $\lambda$ be a partition. Its \emph{Young diagram} is a left-justified array of $n$ squares with $\lambda_1$ squares in the first row, $\lambda_2$ in the second row, and so on. The \emph{conjugate} of $\lambda$, denoted $\lambda^t$, is the partition whose Young diagram is obtained by reflecting the Young diagram of $\lambda$ across its main diagonal.  A \emph{square} $(i,j)$ in a Young diagram of shape $\lambda$ is the square in the $i$-th column and $j$-th row, where rows are numbered from top to bottom and columns from left to right.

Let $\mu$ and $\lambda$ be Young diagrams with $\mu \subseteq \lambda$ (i.e., $\mu_i \leq \lambda_i$ for all $i$), and assume $|\lambda| - |\mu| = n$. The \emph{skew diagram} $\lambda/\mu$ is defined by removing the Young diagram $\mu$ from the top-left corner of $\lambda$. The integer $n$ is called the size of $\lambda/\mu$. We will identify $\lambda/\emptyset$ with $\lambda$.

A standard Young tableau (SYT) $T$ of skew shape $\lambda/\mu$ is a filling of the skew diagram $\lambda/\mu$ with the numbers $1,2,\ldots,n$ such that entries are increasing along each row and each column. We borrow the following example from \cite[page 3]{Z24}.  Figure \ref{fig:young} (left) illustrates an SYT of shape $(4,3,2)$, and Figure 1 (right) illustrates an SYT of shape $(5,4,2)/(2,1)$. Let $\operatorname{SYT}(\lambda/\mu)$ denote the set of SYT of shape $\lambda/\mu$.

\begin{center}
\begin{figure}[h]
\centering
\[
\begin{ytableau}
1 & 4 & 6 & 9 \\
2 & 5 & 8 \\
3 & 7
\end{ytableau}
\quad \quad \quad \quad \quad \quad 
\begin{ytableau}
\none & \none & 1 & 2 & 6 \\
\none & 3 & 4 & 7 \\
5 & 8
\end{ytableau}
\]
\caption{Examples of standard Young tableaux.}\label{fig:young}
\end{figure}
\end{center}

Let $T$ be a standard Young tableau (SYT) of shape $\lambda/\mu$ with $n$ entries. An index $i$ ($1 \leq i \leq n-1$) is called a \emph{descent} of $T$ if $i+1$ appears in a lower row of $T$ than $i$; otherwise, $i$ is called an \emph{ascent} of $T$. For $2 \leq i \leq n-1$, we call $i$ a \emph{peak} of $T$ if $i-1$ is an ascent and $i$ is a descent, and we call $i$ a \emph{valley} of $T$ if $i-1$ is a descent and $i$ is an ascent. Denote by $\operatorname{Peak}(T)$ and $\operatorname{Val}(T)$ the sets of peaks and valleys of $T$, respectively. Note that
$$\operatorname{Peak}(T)=\{i\in[2,n-1]: i\in \operatorname{Des}(T),i-1\not\in \operatorname{Des}(T) \},$$ and 
$$\operatorname{Val}(T)=\{i\in[2,n-1]: i\not\in \operatorname{Des}(T),i-1\in \operatorname{Des}(T) \}.$$
For example, if $T$ is the SYT shown in Figure 1 (left), then $\operatorname{Peak}(T) = \{4,6\}$ and $\operatorname{Val}(T) = \{3,5,7\}$.

Let $B$ be a peak set, define
$$K_{B}=\sum_{A\subseteq [n-1]\atop B\subseteq A \cup (A+1)}M_{\mathcal{I}^{-1}(A)}.$$
Then $$\{ K_B: B\subseteq [2,n-1] \text{~is a peak set} \}$$ is a basis for the Peak algebra $\Pi$. Let $A\subseteq [n-1]$. Define 
$$F_A = \sum_{C\subseteq A} M_{\mathcal{I}^{-1}(C)}.$$
The complement of $A$ is given by $A^c=[n-1]\setminus A$. Define
$$
\begin{array}{cccc}
\psi: &\QSym& \rightarrow& \QSym\\
& F_A & \mapsto & F_{A^c}.
\end{array}
$$
Consider that $\psi(h_n)=e_n$. Therefore, when we restrict $\psi$ to $\Sym$, it gives the well-known omega involtution \cite[Equation (2. 4.15) ]{G18}, 
$$
\begin{array}{cccc}
\omega=\psi|_{\Sym}: &\Sym& \rightarrow& \Sym\\
& h_n & \mapsto & e_n \\
& e_n & \mapsto & h_n \\
& s_{\lambda/\mu} & \mapsto & s_{\lambda^t/\mu^t}. 
\end{array}
$$
The \emph{peak set} of $A$ is given by $\operatorname{Peak}(A) = \{i \in [2, n-1] : i \in A, i-1 \notin A\}$, its \emph{valley set} is given by $\operatorname{Val}(A) = \{i \in [2, n-1] : i \notin A, i-1 \in A\}$. In particular,
\[
\operatorname{Peak}(A) = \operatorname{Val}(A^c).
\]
For example, if $A=\{2,4,5,8\}\subseteq [10-1]$, then $\operatorname{Peak}(A)=\{2,4,8\}$ and $A^c=\{1,3,6,7,9\}$, and then $ \operatorname{Val}(A^c)=\{2,4,8\}$.
By  \cite[Equation (9.1)]{A25}, we have
\begin{equation}
\Theta_{\Sym}(F_A) = K_{\operatorname{Peak}(A)}.
\end{equation}

Let $\mathcal{A} = \{A_1, A_2, \ldots, A_k\}$ be a multi-set of subsets of  $[n-1]$. The {peak statistics} of $\mathcal{A}$ is given by the multi-set $\operatorname{Peak}(\mathcal{A}) = \{\operatorname{Peak}(A_i) : A_i \in \mathcal{A}\}$, and the {valley statistics} of $\mathcal{A}$ is given by the multi-set $\operatorname{Val}(\mathcal{A}) = \{\operatorname{Val}(A_i) : A_i \in \mathcal{A}\}$.

We define the quasisymmetric generating function of $\mathcal{A}$ to be
\[
F_{\mathcal{A}} = \sum_{A_i \in \mathcal{A}} F_{A_i},
\]
 the quasisymmetric generating function of $\operatorname{Peak}(\mathcal{A})$ is given by
\begin{equation}\label{eq:gfpeak}
\Theta_{\QSym}(F_{\mathcal{A}}) = \sum_{A_i \in \mathcal{A}} K_{\operatorname{Peak}(A_i)},
\end{equation}
and the quasisymmetric generating function of $\operatorname{Val}(\mathcal{A})$ is given by
\begin{equation}\label{eq:gfval}
\Theta_{\QSym}(\psi(F_{\mathcal{A}})) = \sum_{A_i \in \mathcal{A}} K_{\operatorname{Peak}(A_i^c)} = \sum_{A_i \in \mathcal{A}} K_{\operatorname{Val}(A_i)}.
\end{equation}
Therefore, by Equations \eqref{eq:gfpeak} and \eqref{eq:gfval}, we have that 
$$\sum_{A_i \in \mathcal{A}} K_{\operatorname{Peak}(A_i)}=\sum_{A_i \in \mathcal{A}} K_{\operatorname{Val}(A_i)}\quad\text{if and only if}\quad \Theta_{\QSym}(F_{\mathcal{A}}) = \Theta_{\QSym}(\psi(F_{\mathcal{A}})).$$

We now give a very short proof of \cite[Theorem 1.1]{Z24}, which is the main result of that paper. 
For a set $A$, write $t^A=\prod_{i\in A} t_i$. Set $t^{A}=1$ if $A=\emptyset$.  

\begin{theorem}\label{thm:main}{\rm \cite[Theorem 1.1]{Z24}} 
Let $n\geq 1$. Then for any skew diagram $\lambda/\mu$ of size $n$, the set-valued statistics $\operatorname{Peak}$ and $\operatorname{Val}$ are equidistributed over $\operatorname{SYT}(\lambda/\mu)$, that is, 
$$\sum_{T\in \operatorname{SYT}(\lambda/\mu)}t^{\operatorname{Peak}(T)}=\sum_{T\in \operatorname{SYT}(\lambda/\mu)}t^{ \operatorname{Val}(T)}.$$
\end{theorem}
\begin{proof} Note that by \cite[Chapter III, Section 8, Example 10]{M95}, we have $\Theta_{\QSym}(e_n)=\Theta_{\QSym}(h_n).$ Therefore, $$\Theta_{\QSym} (\omega(e_n))=\Theta_{\QSym}(h_n)=\Theta_{\QSym}(e_n),$$ thus $\Theta_{\QSym}|_{\Sym}=\Theta_{\QSym}\circ \psi|_{\Sym}.$

For any $\lambda/\mu$, we have 
 $$\Theta_{\QSym}(s_{\lambda/\mu})=\Theta_{\QSym}(\psi(s_{\lambda/\mu})).$$ Let  $\mathcal{A} = \{\operatorname{Des}(T) : T \in \operatorname{SYT}(\lambda)\}$. Then 
\[
F_{\mathcal{A}} = \sum_{T \in \operatorname{SYT}(\lambda/\mu)} F_{\operatorname{Des}(T)} = s_{\lambda/\mu}.
\]
Since 
$$\Theta_{\QSym}(F_{\mathcal{A}}) = \Theta_{\QSym}(\psi(F_{\mathcal{A}})),$$ we have 
$$\sum_{T \in \operatorname{SYT}(\lambda/\mu)} K_{\operatorname{Peak}(\operatorname{Des}(T))}=\sum_{T \in \operatorname{SYT}(\lambda/\mu)} K_{\operatorname{Val}(\operatorname{Des}(T))}.$$
Consider that $$\operatorname{Peak}(\operatorname{Des}(T))=\{i\in[2,n-1]: i\in \operatorname{Des}(T),i-1\not\in \operatorname{Des}(T) \}=\operatorname{Peak}(T),$$ and 
$$\operatorname{Val}(\operatorname{Des}(T))=\{i\in[2,n-1]: i\not\in \operatorname{Des}(T),i-1\in \operatorname{Des}(T) \}=\operatorname{Val}(T).$$ Therefore, 
$$\sum_{T \in \operatorname{SYT}(\lambda/\mu)} K_{\operatorname{Peak}(T)}=\sum_{T \in \operatorname{SYT}(\lambda/\mu)} K_{\operatorname{Val}(T)}.$$
Now applying the map $K_{\operatorname{Peak}(T)}\mapsto t^{{\operatorname{Peak}(T)}}$ to the the both sides of the above equation gives 
$$\sum_{T\in \operatorname{SYT}(\lambda/\mu)}t^{\operatorname{Peak}(T)}=\sum_{T\in \operatorname{SYT}(\lambda/\mu)}t^{ \operatorname{Val}(T)}.$$
%
%
\end{proof}

\section{The shuffle basis of $\QSym$}\label{sec:basis}
 
In this section, we define our new basis of $\QSym$ and study its properties. This basis is a shuffle basis and also its elements are the eigenvectors of $\Theta_\QSym$. 

\begin{definition}
	For a composition $\alpha=(\alpha_1,\dots,\alpha_\ell)$,
	\begin{itemize}
		\item its odd-to-even set, $\OtE(\alpha)$ is defined to be $\{i:\alpha_i\text{ is odd},\alpha_{i+1}\text{ is even}\}$,
		\item its even-to-odd set, $\EtO(\alpha)$ is defined to be $\{i:\alpha_i\text{ is even},\alpha_{i+1}\text{ is odd}\}$,
		\item if $\OtE(\alpha)=\{i_1,i_2,\dots,i_k\}$ with $i_1<i_2<\cdots<i_k$, the odd-min composition of $\alpha$ is defined to be
		$$m_o(\alpha)=(\alpha_1+\cdots+\alpha_{i_1},\alpha_{i_1+1}+\cdots+\alpha_{i_2},\dots,\alpha_{i_k+1}+\cdots+\alpha_{\ell}),$$
		\item given a composition $\beta=(\beta_1,\dots,\beta_p)$ such that $m_o(\alpha)\leq\beta\leq\alpha$, then each $\beta_i$ is a sum of consecutive parts of $\alpha$. Assume $\beta_i=\alpha_{j}+\alpha_{j+1}+\cdots+\alpha_{j+k}$ for some $j,k$. Let $\rmO_\alpha^\beta(i)$ and $\rmE_\alpha^\beta(i)$ be the number of odd and even parts in $(\alpha_j,\alpha_{j+1},\dots,\alpha_{j+k})$, respectively. We define the coefficients
		$$c_{\alpha}^{\beta}(i)=\cfrac{1}{\rmO_\alpha^\beta(i)!\rmE_\alpha^\beta(i)!},$$
		and
		$$c_\alpha^\beta=\prod_i c_\alpha^\beta(i).$$
	\end{itemize}
\end{definition}

\begin{example}
	If $\alpha=34421332$, then $\OtE(\alpha)=\{1,7\}$, $\EtO(\alpha)=\{4\}$ and $m_o(\alpha)=(3,17,2)$. Suppose $\beta=(3,8,9,2)=(3,4+4,2+1+3+3,2)$, then
	\begin{itemize}
		\item $\rmO_\alpha^\beta(1)=1$, $\rmE_\alpha^\beta(1)=0$ and $c_\alpha^\beta(1)=1$,
		\item $\rmO_\alpha^\beta(2)=0$, $\rmE_\alpha^\beta(2)=2$ and $c_\alpha^\beta(2)=1/2$,
		\item $\rmO_\alpha^\beta(3)=3$, $\rmE_\alpha^\beta(3)=1$ and $c_\alpha^\beta(3)=1/6$,
		\item $\rmO_\alpha^\beta(4)=0$, $\rmE_\alpha^\beta(4)=1$ and $c_\alpha^\beta(4)=1$,
	\end{itemize}
	and overall, $c_\alpha^\beta=1/12$.
\end{example}

\begin{definition}
	The shuffle function $S_\alpha$, indexed by a composition $\alpha\models n$, is defined as
	$$S_\alpha=\sum_{m_o(\alpha)\leq\beta\leq\alpha}c_\alpha^\beta M_\beta.$$
\end{definition}

\begin{proposition}
	The set $\{S_\alpha:\alpha\models n\}$ of shuffle functions gives a basis of $\QSym_n$.
\end{proposition}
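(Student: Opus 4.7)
The plan is to show that the transition matrix between $\{S_\alpha:\alpha\models n\}$ and the monomial basis $\{M_\beta:\beta\models n\}$ of $\QSym_n$ is unitriangular with respect to a suitable order on compositions. Since $\{M_\beta:\beta\models n\}$ is already known to be a basis of $\QSym_n$, invertibility of this transition matrix will immediately give the desired conclusion.

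First I would observe that the summation defining $S_\alpha$ ranges over compositions $\beta$ with $m_o(\alpha)\leq\beta\leq\alpha$ in the refinement order. By the definition of refinement, every such $\beta$ satisfies $\mathcal{I}(\beta)\subseteq\mathcal{I}(\alpha)$, hence $\ell(\beta)\leq\ell(\alpha)$ with equality precisely when $\beta=\alpha$.

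Next I would compute the leading coefficient $c_\alpha^\alpha$. When $\beta=\alpha$, each part $\beta_i$ equals the single part $\alpha_i$ of $\alpha$, so the associated consecutive block of $\alpha$ has exactly one element. Thus $(\rmO_\alpha^\alpha(i),\rmE_\alpha^\alpha(i))$ equals either $(1,0)$ or $(0,1)$ depending on the parity of $\alpha_i$, and in either case $\rmO_\alpha^\alpha(i)!\,\rmE_\alpha^\alpha(i)!=1$. Hence $c_\alpha^\alpha(i)=1$ for every $i$, and $c_\alpha^\alpha=1$.

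Combining the two observations, I would fix any linear order on the (finitely many) compositions of $n$ that refines the refinement order — for instance, listing compositions in order of decreasing length, breaking ties arbitrarily. In this order
$$S_\alpha=M_\alpha+\sum_{\beta<\alpha}c_\alpha^\beta M_\beta,$$
so the transition matrix from $\{S_\alpha\}$ to $\{M_\beta\}$ is upper unitriangular and therefore invertible. Consequently $\{S_\alpha:\alpha\models n\}$ is a basis of $\QSym_n$. There is no serious obstacle here; the only mildly delicate step is the bookkeeping verification that $c_\alpha^\alpha=1$, which reduces to noticing that the one-element blocks force the factorial denominators to collapse to $1$.
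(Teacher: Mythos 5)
Your argument is correct and is essentially the paper's own proof, just written out in more detail: both verify $c_\alpha^\alpha=1$ and then invoke unitriangularity of the transition matrix to the monomial basis with respect to (a linear extension of) the refinement order. Nothing further is needed.
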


\begin{proof}
	It is clear from the definition that $c_\alpha^\alpha=1$ for all compositions $\alpha$. Then the proposition follows from the triangularity of the transition matrix from the shuffle basis to the monomial basis.
\end{proof}

\begin{theorem}\label{thm:shuffle-product}
	The product of the shuffle basis is given by the shuffle of compositions, i.e.
	$$S_\alpha\cdot S_\beta=\sum_{\gamma\in\alpha\shuffle\beta}S_\gamma.$$
\end{theorem}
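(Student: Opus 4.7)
The plan is to expand both sides of the identity in the monomial basis and match coefficients via a weight-preserving correspondence. I would first write the left-hand side as
$$S_\alpha\cdot S_\beta \;=\; \sum_{\substack{m_o(\alpha)\leq\alpha'\leq\alpha\\ m_o(\beta)\leq\beta'\leq\beta}} c_\alpha^{\alpha'}c_\beta^{\beta'}\sum_{\delta\in\alpha'\,\overline{\shuffle}\,\beta'}M_\delta,$$
using the definition of the shuffle basis and the quasi-shuffle formula for $M_{\alpha'}\cdot M_{\beta'}$, and the right-hand side as
$$\sum_{\gamma\in\alpha\shuffle\beta}S_\gamma \;=\; \sum_{\gamma\in\alpha\shuffle\beta}\sum_{m_o(\gamma)\leq\delta\leq\gamma}c_\gamma^\delta\,M_\delta.$$
For a fixed composition $\delta$, the coefficient of $M_\delta$ on the LHS is a weighted count of triples $(\alpha',\beta',\rho)$ where $\rho$ is a quasi-shuffle realization expressing $\delta$ as an element of $\alpha'\,\overline{\shuffle}\,\beta'$; the coefficient on the RHS is a weighted count of pairs $(\gamma,\text{grouping})$ where the grouping of $\gamma$ into consecutive blocks sums to the parts of $\delta$ and respects $m_o(\gamma)$.

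I would then construct a map RHS $\to$ LHS by extracting, for each block $\delta_i$ of the grouping, its $\alpha$-subsequence $G^\alpha_i$ and $\beta$-subsequence $G^\beta_i$, then assembling $\alpha'$ and $\beta'$ from the non-empty block sums $|G^\alpha_i|$ and $|G^\beta_i|$, and letting $\rho$ label each position as type $(A)$, $(B)$, or $(AB)$ according to whether $G^\beta_i$, $G^\alpha_i$, or neither is empty. The structural heart of the proof is the claim that each $G^\alpha_i$ is a consecutive slice of $\alpha$ that lies within a single piece of $m_o(\alpha)$ (and analogously for $G^\beta_i$); this follows from the fact that a piece of $m_o(\gamma)$ has the ``evens-then-odds'' shape $E^aO^b$ and that the shuffle preserves relative orders within $\alpha$ and within $\beta$, so the $\alpha$-labelled (resp.\ $\beta$-labelled) sub-sequence of a block inherits the same shape and cannot contain an odd-to-even transition inside $\alpha$ (resp.\ $\beta$).

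To close the argument I would count the RHS fiber over a fixed LHS triple: type $(A)$ and $(B)$ positions admit only one interleaving, while at a type $(AB)$ position with $G^\alpha_i=E^{a_1}O^{b_1}$ and $G^\beta_i=E^{a_2}O^{b_2}$ there are $\binom{a_1+a_2}{a_1}\binom{b_1+b_2}{b_1}$ valid interleavings (those that preserve internal orders and still yield a block of shape $E^{a_1+a_2}O^{b_1+b_2}$, as forced by $m_o(\gamma)\leq\delta$). The identity
$$\binom{a_1+a_2}{a_1}\binom{b_1+b_2}{b_1}\cdot\frac{1}{(a_1+a_2)!\,(b_1+b_2)!}=\frac{1}{a_1!\,a_2!\,b_1!\,b_2!}$$
then shows factor-by-factor that $\sum_{\text{fiber}}c_\gamma^\delta=c_\alpha^{\alpha'}c_\beta^{\beta'}$, so the $M_\delta$-coefficients agree. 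The main obstacle is the structural ``evens-then-odds'' inheritance claim together with the ensuing enumeration of admissible interleavings on the RHS: once those are established, the multinomial identity above gives the exact weight balance with no further case analysis.
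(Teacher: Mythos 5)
Your proposal is correct and takes essentially the same route as the paper: the same block-extraction correspondence sending a pair $(\gamma,\delta)$ to a triple $(\alpha',\beta',\rho)$, the same structural observation that $m_o(\gamma)\leq\delta$ forces each block to have shape $E^aO^b$ so that $m_o(\alpha)\leq\alpha'\leq\alpha$ and $m_o(\beta)\leq\beta'\leq\beta$, and the same binomial fiber count combined with the multinomial identity. If anything, your weight balance $\sum_{\text{fiber}}c_\gamma^\delta=c_\alpha^{\alpha'}c_\beta^{\beta'}$ is written in the correct direction, whereas the paper's displayed relation places the factor $d_\epsilon$ on the wrong side of the equation.
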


\begin{proof}
	For each composition $\gamma\in\alpha\shuffle\beta$, and $m_o(\gamma)\leq\delta\leq\gamma$, we construct a pair of compositions $\hat{\alpha},\hat{\beta}$ and $\epsilon\in\hat{\alpha}\overline{\shuffle}\hat{\beta}$ as follows. By definition, $\delta_i=\alpha_{p_i}+\alpha_{p_i+1}+\cdots+\alpha_{p_i+q}+\beta_{r_i}+\beta_{r_i+1}+\cdots+\beta_{r_i+t}$ for some $p_i,q,r_i,t$ such that the multi-set $\{\alpha_{p_i},\dots,\alpha_{p_i+q},\beta_{r_i},\beta_{r_i+t}\}$ is equal to $\{\gamma_{s_i},\gamma_{s_i+1},\dots,\gamma_{s_i+q+t}\}$ for some $s_i$ and $\OtE(\gamma_{s_i},\dots,\gamma_{s_i+q+t})=\emptyset$.
	
	We set $a_i=\alpha_{p_i}+\cdots+\alpha_{p_i+q}$ and $b_i=\beta_{r_i}+\cdots+\beta_{r_i+t}$. Then, $\hat{\alpha}$ and $\hat{\beta}$ are the compositions obtained from $(a_1,\dots,a_{\ell(\delta)})$ and $(b_1,\dots,b_{\ell(\delta)})$, respectively by removing zeros. Since $\OtE(\alpha_{p_i},\dots,\alpha_{p_i+q})=\OtE(\beta_{r_i},\dots,\beta_{r_i+t})=\emptyset$, we have $m_o(\alpha)\leq\hat{\alpha}\leq\alpha$ and $m_o(\beta)\leq\hat{\beta}\leq\beta$. Furthermore, $\epsilon$ is obtained from the shuffle $(a_1,b_1,\dots,a_{\ell(\delta)},b_{\ell(\delta)})$, removing possible zeros, then merged to $(a_1+b_1,\dots,a_{\ell(\delta)}+b_{\ell(\delta)})$. As compositions, we have $\epsilon=\delta$.
	
	This process gives a map
	$$Y:\{(\gamma,\delta):\gamma\in\alpha\shuffle\beta,m_o(\gamma)\leq\delta\leq\gamma\}\to\{(\hat{\alpha},\hat{\beta},\epsilon):m_o(\alpha)\leq\hat{\alpha}\leq\alpha,m_o(\beta)\leq\hat{\beta}\leq\beta,\epsilon\in\hat{\alpha}\overline{\shuffle}\hat{\beta}\}.$$
	For example, for $\alpha={\color{red}2113}$, $\beta={\color{blue}122}$, $\gamma={\color{blue}12}{\color{red}2}{\color{blue}2}{\color{red}113}$ and $\delta=1623$, then $a_1=0,a_2=2,a_3=2,a_4=3$, $b_1=1,b_2=4,b_3=0,b_4=0$, $\hat{\alpha}=223$, $\hat{\beta}=14$ and $\epsilon=1623$.
	
	The map $Y$ is surjective. From $\hat{\alpha},\hat{\beta}$ and $\epsilon$, we can recover $a_i$ and $b_i$ from the  quasishuffle. Suppose $a_i=\alpha_{p_i}+\cdots+\alpha_{p_i+q}$, $b_i=\beta_{r_i}+\cdots+\beta_{r_i+t}$ and $\EtO(\alpha_{p_i},\dots,\alpha_{p_i+q})=\{x\}$, $\EtO(\beta_{r_i},\dots,\beta_{r_i+t})=\{y\}$ (take $x=q$ and $y=t$ when non-existing), then one pre-image is $$\gamma=(\dots,\alpha_{p_i},\dots,\alpha_{p_i+x},\beta_{r_i},\dots,\beta_{r_i+y},\alpha_{p_i+x+1},\dots,\alpha_{p_i+q},\beta_{r_i+y+1},\dots,\beta_{r_i+t},\dots).$$
	In general, let $d_\epsilon(i)=\binom{x+y}{x}\binom{q+t-x-y}{q-x}$ and let $d_\epsilon=\prod_id_\epsilon(i)$. Then the number of pre-images is $d_\epsilon$.
	
	Taking the example above, there are totally three $\gamma$ that yields the same result, namely ${\color{blue}1}{\color{red}2}{\color{blue}22}{\color{red}113}$, ${\color{blue}12}{\color{red}2}{\color{blue}2}{\color{red}113}$ and ${\color{blue}122}{\color{red}2113}$.
	
	It is not hard to see that $c_\gamma^\delta=d_\epsilon c_\alpha^{\hat{\alpha}}c_\beta^{\hat{\beta}}$. Therefore, the statement of this theorem follows from the identities
	\begin{align*}
		\sum_{\gamma\in\alpha\shuffle\beta}S_\gamma=\sum_{\gamma\in\alpha\shuffle\beta}\sum_{m_o(\gamma)\leq\delta\leq\gamma}c_{\gamma}^{\delta}M_\delta=\sum_{m_o(\alpha)\leq\hat{\alpha}\leq\alpha \atop m_o(\beta)\leq\hat{\beta}\leq\beta}\sum_{\epsilon\in\hat{\alpha}\overline{\shuffle}\hat{\beta}}c_{\alpha}^{\hat{\alpha}}c_{\beta}^{\hat{\beta}}M_\epsilon=\sum_{m_o(\alpha)\leq\hat{\alpha}\leq\alpha \atop m_o(\beta)\leq\hat{\beta}\leq\beta}c_{\alpha}^{\hat{\alpha}}c_{\beta}^{\hat{\beta}}M_{\hat{\alpha}}\cdot M_{\hat{\beta}}=S_\alpha\cdot S_\beta.
	\end{align*}
\end{proof}

\begin{theorem}\label{thm:shuffle-coproduct}
	The coproduct of the shuffle basis is given by the deconcatenation of the compositions, i.e.
	$$\Delta(S_\alpha)=\sum_{\beta\gamma=\alpha}S_\beta\otimes S_\gamma.$$
\end{theorem}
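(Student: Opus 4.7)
The plan is to expand $\Delta(S_\alpha)$ using the definition of the shuffle basis together with the deconcatenation coproduct of $M_\beta$, then reindex the resulting double sum via a bijection. Starting from
$$\Delta(S_\alpha)=\sum_{m_o(\alpha)\leq\beta\leq\alpha} c_\alpha^\beta \sum_{\delta\epsilon=\beta} M_\delta\otimes M_\epsilon,$$
the goal is to reorganize the indexing triples $(\beta,\delta,\epsilon)$ into four-tuples $(\beta',\gamma',\delta,\epsilon)$ with $\beta'\gamma'=\alpha$, $m_o(\beta')\leq\delta\leq\beta'$, and $m_o(\gamma')\leq\epsilon\leq\gamma'$.

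First I would set up the bijection. Given $(\beta,\delta,\epsilon)$ with $m_o(\alpha)\leq\beta\leq\alpha$ and $\delta\epsilon=\beta$, the cut position $k=\ell(\delta)$ in $\beta$ corresponds, via the refinement $\beta\leq\alpha$, to a unique index $j$ in $\alpha$ with $\beta_1+\cdots+\beta_k=\alpha_1+\cdots+\alpha_j$; send this to $(\beta',\gamma',\delta,\epsilon)$ where $\beta'=(\alpha_1,\ldots,\alpha_j)$ and $\gamma'=(\alpha_{j+1},\ldots,\alpha_\ell)$. The inverse map is simply concatenation $\beta=\delta\epsilon$ with cut at position $\ell(\delta)$.

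Next I would verify the refinement constraints. The inequalities $\delta\leq\beta'$ and $\epsilon\leq\gamma'$ are immediate, since each part of $\delta$ (resp.\ $\epsilon$) merges consecutive parts of $\alpha$ lying entirely in $\beta'$ (resp.\ $\gamma'$). For the lower bounds, note that $\OtE(\beta')$ is exactly the subset of $\OtE(\alpha)$ of positions $<j$, and $\OtE(\gamma')$ is the corresponding shifted subset for positions $>j$; since $\beta$ does not merge across any position of $\OtE(\alpha)$, neither does $\delta$ or $\epsilon$ across their respective $\OtE$ sets. The coefficient factorization $c_\alpha^\beta=c_{\beta'}^\delta\cdot c_{\gamma'}^\epsilon$ follows from locality: each local factor $c(i)=1/(\rmO(i)!\,\rmE(i)!)$ depends only on the consecutive parts of $\alpha$ merged into the $i$-th part of the refiner, and these parts are identical whether read inside $\alpha$, inside $\beta'$, or inside $\gamma'$.

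Applying the bijection and coefficient identity reindexes the sum as
$$\Delta(S_\alpha)=\sum_{\beta'\gamma'=\alpha}\left(\sum_{m_o(\beta')\leq\delta\leq\beta'} c_{\beta'}^\delta M_\delta\right)\otimes\left(\sum_{m_o(\gamma')\leq\epsilon\leq\gamma'} c_{\gamma'}^\epsilon M_\epsilon\right)=\sum_{\beta'\gamma'=\alpha} S_{\beta'}\otimes S_{\gamma'}.$$
The main point requiring care is the boundary index $j$: if $j\in\OtE(\alpha)$ (that is, $\alpha_j$ is odd and $\alpha_{j+1}$ is even), one must confirm that $\beta$ does not merge across $j$, but this is automatic because $\beta=\delta\epsilon$ is a strict concatenation whose cut in $\beta$ sits exactly at $j$.
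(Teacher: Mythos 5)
Your proof is correct and follows essentially the same route as the paper's: both expand $S_\alpha$ in the monomial basis, apply deconcatenation to each $M_\beta$, and reorganize the sum using the facts that a cut in $\beta\leq\alpha$ must sit at a partial sum of $\alpha$, that the refinement conditions $m_o(\alpha)\leq\beta\leq\alpha$ split into the corresponding conditions on the two halves, and that the coefficient $c_\alpha^\beta$ factors locally. The paper phrases this component-by-component via $\Delta_{i,n-i}$ while you phrase it as a bijection of index sets, but the substance is identical.
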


\begin{proof}
	Fix a composition $\alpha\models n$. Since $\QSym$ is graded, it suffices to show that for each $i$,
	$$\Delta_{i,n-i}(S_\alpha)=\sum_{\beta\gamma=\alpha \atop \beta\models i}S_{\beta}\otimes S_{\gamma}.$$
	For simplicity, we use $\Delta_i$ to denote $\Delta_{i,n-i}$.
	
	Fix an $i$. If there is no $p$ such that $\alpha_1+\cdots+\alpha_p=i$, then for any $\beta\leq\alpha$, there is no $q$ such that $\beta_1+\cdots+\beta_q=i$, which implies $\Delta_i(M_\beta)=0$. In this case,
	$$\Delta_i(S_\alpha)=\sum_{m_o(\alpha)\leq\beta\leq\alpha}c_\alpha^\beta\Delta_i(M_\beta)=0.$$
	
	Assume that $\alpha_1+\cdots+\alpha_p=i$ for some $p$, let ${}^i\alpha=(\alpha_1,\dots,\alpha_p)$ and $\alpha^i=(\alpha_{p+1},\dots,\alpha_{\ell(\alpha)})$, then
	\begin{align*}
		\Delta_i(S_\alpha)&=\sum_{m_o(\alpha)\leq\beta\leq\alpha}c_\alpha^\beta\Delta_i(M_\beta)\\
		&=\sum_{m_o(\alpha)\leq\beta\leq\alpha \atop \beta_1+\cdots+\beta_q=i}c_\alpha^\beta M_{(\beta_1,\dots,\beta_q)}\otimes M_{(\beta_{q+1},\dots,\beta_{\ell(\beta)})}\\
		&=\sum_{m_o(\alpha)\leq\beta\leq\alpha \atop \beta_1+\cdots+\beta_q=i}\left( M_{(\beta_1,\dots,\beta_q)}\prod_{i=1}^q c_\alpha^\beta(i)\right)\otimes\left( M_{(\beta_{q+1},\dots,\beta_{\ell(\beta)})}\prod_{i=q+1}^{\ell(\beta)} c_\alpha^\beta(i)\right)\\
		&=\sum_{m_o({}^i\alpha)\leq \gamma\leq {}^i\alpha\atop m_o(\alpha^i)\leq\mu\leq\alpha^i}\left(c_{{}^i\alpha}^\gamma M_\gamma\right)\otimes \left(c_{\alpha^i}^\mu M_\mu\right).
	\end{align*}
	
	The last equality follows from the fact that $m_o(\alpha)\leq\beta\leq\alpha$ and $\beta_1+\cdots+\beta_q=i$ if and only if $m_o({}^i\alpha)\leq {}^i\beta\leq {}^i\alpha$ and $m_o(\alpha^i)\leq\beta^i\leq\alpha^i$. Moreover, it is straight forward from the definition that $c_{{}^i\alpha}^{{}^i\beta}=\prod_{i=1}^qc_\alpha^\beta(i)$ and $c_{\alpha^i}^{\beta^i}=\prod_{i=q+1}^{\ell(\beta)}c_\alpha^\beta(i)$.
\end{proof}

By the Hopf structure given above, the shuffle basis makes $\QSym$ a shuffle algebra. And the antipode formula for shuffle basis follows from the well-known antipode formula of shuffle algebras \cite{BS14}.

\begin{corollary}
	Let $\alpha=(\alpha_1,\dots,\alpha_\ell)$. The antipode on the shuffle basis is given by
	$$\mathcal{S}(S_\alpha)=(-1)^{\ell}S_{(\alpha_\ell,\dots,\alpha_1)}.$$
\end{corollary}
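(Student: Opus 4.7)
The plan is to leverage Theorems \ref{thm:shuffle-product} and \ref{thm:shuffle-coproduct} simultaneously: together they assert that the linear map $S_\alpha \mapsto \alpha_1 \alpha_2 \cdots \alpha_{\ell(\alpha)}$ is a graded Hopf algebra isomorphism from $(\QSym, \cdot, \Delta)$ onto the free shuffle Hopf algebra on the alphabet of positive integers, where the product is shuffle of words and the coproduct is deconcatenation. Since the antipode of any Hopf algebra is uniquely determined, it suffices to transport the known antipode of the shuffle Hopf algebra back through this identification.

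Next, I would appeal to the classical antipode formula for the shuffle Hopf algebra (the reference \cite{BS14} cited by the authors): for a word $w = w_1 w_2 \cdots w_n$ under shuffle product and deconcatenation coproduct,
$$\mathcal{S}(w_1 w_2 \cdots w_n) = (-1)^n w_n w_{n-1} \cdots w_1.$$
Translating through the isomorphism gives $\mathcal{S}(S_\alpha) = (-1)^{\ell(\alpha)} S_{(\alpha_{\ell(\alpha)}, \dots, \alpha_1)}$ immediately.

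For a self-contained verification, one can instead induct on $n = |\alpha|$ via the antipode axiom $\mu \circ (\mathcal{S} \otimes \mathrm{id}) \circ \Delta = \eta \circ \epsilon$. Plugging in the candidate formula together with the deconcatenation coproduct from Theorem \ref{thm:shuffle-coproduct} and the shuffle product from Theorem \ref{thm:shuffle-product} reduces the claim to the combinatorial identity
$$\sum_{\beta\gamma = \alpha} (-1)^{\ell(\beta)} \sum_{\delta \in (\alpha_{\ell(\beta)}, \dots, \alpha_1) \shuffle \gamma} S_\delta = 0$$
for every nonempty $\alpha$. This vanishing is witnessed by a sign-reversing involution on the indexing set of pairs $(\beta, \delta)$: given the decoration of each letter of $\delta$ by its origin (reversed prefix or suffix), toggle the classification of the first position, which changes $\ell(\beta)$ by exactly one and hence flips the sign while preserving $\delta$.

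I expect no serious obstacle: the high-level route is essentially tautological once the previous two theorems are available, and the direct combinatorial route just packages the standard Eulerian-style argument for shuffle antipodes. The only mildly delicate point in the self-contained version is handling the boundary case when $\beta = ()$ or $\gamma = ()$ in the involution, but this is routine once one fixes a convention for the empty shuffle.
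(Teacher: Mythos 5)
Your primary argument is exactly the paper's: having established the shuffle product and deconcatenation coproduct in Theorems \ref{thm:shuffle-product} and \ref{thm:shuffle-coproduct}, the paper simply invokes the well-known antipode formula for shuffle algebras from \cite{BS14}, which is your first route verbatim. Your supplementary self-contained verification via the sign-reversing involution on the first letter's origin is a correct (and standard) way to prove that cited formula, but it goes beyond what the paper does.
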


Recall that a composition $\alpha=(\alpha_1,\dots,\alpha_\ell)$ is odd if each $\alpha_i$ is an odd number.

\begin{theorem}\label{thm:theta}
	The map $\Theta_\QSym$ acts on the shuffle basis as follows
	$$\Theta_\QSym(S_\alpha)=\left\{\begin{array}{cc}
		2^{\ell(\alpha)}S_\alpha & \text{if }\alpha \text{ is odd}\\
		0& \text{otherwise.}
		\end{array}\right.$$
\end{theorem}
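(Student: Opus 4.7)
The plan is induction on $\ell(\alpha)$, leveraging two facts. First, by Theorem~\ref{thm:shuffle-coproduct} the shuffle basis has deconcatenation coproduct, and since $\Theta_\QSym$ is a Hopf morphism it satisfies
$$\Delta(\Theta_\QSym(S_\alpha)) = \sum_{\beta\gamma=\alpha}\Theta_\QSym(S_\beta)\otimes\Theta_\QSym(S_\gamma).$$
Second, a short direct argument shows that the primitives of $(\QSym,\Delta)$ in degree $n$ form the one-dimensional subspace $\mathbb{C}\,M_{(n)} = \mathbb{C}\,S_{(n)}$. Together these pin down $\Theta_\QSym(S_\alpha)$ up to a scalar multiple of $S_{(|\alpha|)}$ at each inductive step, with the scalar determined by the defining identity $\zeta_\QSym\circ\Theta_\QSym = \overline{\zeta_\QSym^{-1}}\zeta_\QSym$.

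For the base case $\ell(\alpha)=1$, I will use that $S_{(n)}=M_{(n)}$ is primitive to write $\Theta_\QSym(S_{(n)})=c_n S_{(n)}$ and then compute $c_n = \zeta_\QSym(\Theta_\QSym(S_{(n)})) = (\overline{\zeta_\QSym^{-1}}\zeta_\QSym)(S_{(n)}) = 1+(-1)^{n+1}$, matching the claim. For the inductive step on $\ell(\alpha)\geq 2$ I split into two cases. If $\alpha$ is all odd, every nonempty prefix and suffix in the deconcatenation is all odd, so the inductive hypothesis applied to every nontrivial tensor term on the right shows that $\Delta(\Theta_\QSym(S_\alpha)-2^{\ell(\alpha)}S_\alpha)$ has the form $(\cdot)\otimes 1 + 1\otimes(\cdot)$; hence the difference is primitive and equals $c\,S_{(|\alpha|)}$. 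If $\alpha$ has an even part, every nontrivial deconcatenation places that part in one of $\beta,\gamma$, so by induction one factor in every nontrivial tensor term vanishes; thus $\Theta_\QSym(S_\alpha)$ itself is primitive and equals $c\,S_{(|\alpha|)}$. Applying $\zeta_\QSym$ in each case reduces the determination of $c$ to evaluating $(\overline{\zeta_\QSym^{-1}}\zeta_\QSym)(S_\alpha)$.

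The main obstacle is this character evaluation. I plan to first compute, directly from the definition of $S_\gamma$, that $\zeta_\QSym(S_\gamma)$ is nonzero only when $\OtE(\gamma)=\emptyset$, in which case it equals $1/(r_o(\gamma)!\,r_e(\gamma)!)$, where $r_o$ and $r_e$ denote the total numbers of odd and even parts. Combining this with the antipode formula $\mathcal{S}(S_\beta)=(-1)^{\ell(\beta)}S_{(\beta_{\ell},\dots,\beta_1)}$ from the preceding corollary and the parity congruence $|\beta|+\ell(\beta)\equiv r_e(\beta)\pmod{2}$, I obtain that $\overline{\zeta_\QSym^{-1}}(S_\beta)$ is nonzero only when $\EtO(\beta)=\emptyset$, with value $(-1)^{r_e(\beta)}/(r_o(\beta)!\,r_e(\beta)!)$. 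The surviving terms in $\sum_{\beta\gamma=\alpha}\overline{\zeta_\QSym^{-1}}(S_\beta)\zeta_\QSym(S_\gamma)$ then force $\alpha$ to have the shape (odd block)(even block)(odd block), and the inner sum over how many even parts go to $\beta$ collapses via $\sum_{k=0}^{q}(-1)^k\binom{q}{k}=0$ for $q\geq 1$, killing $c$ whenever $\alpha$ has an even part. In the all-odd case a direct binomial evaluation instead gives $(\overline{\zeta_\QSym^{-1}}\zeta_\QSym)(S_\alpha) = 2^{\ell(\alpha)}/\ell(\alpha)! = 2^{\ell(\alpha)}\zeta_\QSym(S_\alpha)$, so $c=0$ there as well, completing the induction.
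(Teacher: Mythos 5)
Your proof is correct, but it reaches the formula by a genuinely different route than the paper. The paper posits the candidate map $\Phi(S_\alpha)=2^{\ell(\alpha)}S_\alpha$ (for odd $\alpha$, else $0$), checks it is a Hopf morphism, and then invokes the Aguiar--Bergeron--Sottile uniqueness of the morphism $(\QSym,\overline{\zeta_\QSym^{-1}}\zeta_\QSym)\to(\QSym,\zeta_\QSym)$, so that the whole burden falls on verifying the character identity $\zeta_\QSym\circ\Phi=\overline{\zeta_\QSym^{-1}}\zeta_\QSym$ on the shuffle basis. You instead run an induction on $\ell(\alpha)$: the deconcatenation coproduct plus the fact that the primitives of $\QSym_n$ are exactly $\mathbb{C}M_{(n)}=\mathbb{C}S_{(n)}$ pins down $\Theta_\QSym(S_\alpha)$ up to a scalar multiple of $S_{(|\alpha|)}$, and the scalar is then fixed by evaluating $\overline{\zeta_\QSym^{-1}}\zeta_\QSym$ on $S_\alpha$. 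The core analytic content is shared: your formulas for $\zeta_\QSym(S_\gamma)$ and $\overline{\zeta_\QSym^{-1}}(S_\beta)$ (supported on $\OtE(\gamma)=\emptyset$ and $\EtO(\beta)=\emptyset$ respectively, with the sign $(-1)^{|\beta|+\ell(\beta)}=(-1)^{r_e(\beta)}$) are exactly the paper's equations (3.1) and (3.2), and the collapsing binomial sum over cuts inside the even block is the same cancellation the paper performs in its four-case analysis. What your approach buys is that you never need to verify that the candidate is a Hopf morphism (the paper leaves this as ``easy to check,'' but it rests on both the product and coproduct formulas for $S_\alpha$); you only use the coproduct, Theorem \ref{thm:shuffle-coproduct}, together with the one-dimensionality of the primitives, which follows immediately from deconcatenation on the monomial basis. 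The paper's route is shorter once the Hopf-morphism check is granted, and avoids the induction; both are complete arguments.
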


\begin{proof}
	Let $\Phi:\QSym\to\QSym$ be the linear map defined on the shuffle basis as
	$$\Phi(S_\alpha)=\left\{\begin{array}{cc}
	2^{\ell(\alpha)}S_\alpha & \text{if }\alpha \text{ is odd}\\
	0& \text{otherwise.}
	\end{array}\right.$$
	It is easy to check that $\Phi$ is a Hopf algebra morphism.
	Since $\Theta_\QSym$ is the unique combinatorial Hopf morphism from $(\QSym,\overline{\zeta_\QSym^{-1}}\zeta_\QSym)$ to $(\QSym,\zeta_\QSym)$, it suffices to show that $\overline{\zeta_\QSym^{-1}}\zeta_\QSym=\zeta_\QSym\circ\Phi$.
	
	Recall that
	$$\zeta_\QSym(M_\alpha)=\left\{\begin{array}{cc}
	1 & \text{if }\ell(\alpha)=0\text{ or }1\\
	0 & \text{otherwise}.
	\end{array}\right.$$
	It follows from the definition of shuffle functions that,
	\begin{equation}\label{eq:3.1}
		\zeta_\QSym(S_\alpha)=\left\{\begin{array}{cc}
			\cfrac{1}{p!(\ell(\alpha)-p)!} & \text{if }\EtO(\alpha)=\{p\}\text{ and }\OtE(\alpha)=\emptyset\\
			\cfrac{1}{\ell(\alpha)!} & \text{if }\EtO(\alpha)=\OtE(\alpha)=\emptyset\\
			0 & \text{otherwise}.
		\end{array}\right.
	\end{equation}
	Hence, we obtain the following formula for $\zeta_\QSym\circ\Phi$
	$$\zeta_\QSym\circ\Phi(S_\alpha)=\left\{\begin{array}{cc}
	\cfrac{2^{\ell(\alpha)}}{\ell(\alpha)!}& \text{if }\alpha\text{ is odd}\\
	0 & \text{otherwise.}
	\end{array}\right.$$
	Combining with the antipode formula yields
	\begin{equation}\label{eq:3.2}
		\overline{\zeta_\QSym^{-1}}(S_\alpha)=\left\{\begin{array}{cc}
			(-1)^{|\alpha|+\ell(\alpha)}\cfrac{1}{q!(\ell(\alpha)-q)!} & \text{if }\EtO(\alpha)=\emptyset\text{ and }\OtE(\alpha)=\{q\}\\
			(-1)^{|\alpha|+\ell(\alpha)}\cfrac{1}{\ell(\alpha)!} & \text{if }\EtO(\alpha)=\OtE(\alpha)=\emptyset\\
			0 & \text{otherwise}.
		\end{array}\right.
	\end{equation}
	To sum up,
	\begin{align*}
	&\overline{\zeta_\QSym^{-1}}\zeta_\QSym(S_\alpha)=\sum_{i=0}^{\ell(\alpha)}\overline{\zeta_\QSym^{-1}}(S_{(\alpha_1,\dots,\alpha_i)})\zeta_\QSym(S_{(\alpha_{i+1},\dots,\alpha_{\ell(\alpha)})})\\
	=&\left\{\begin{array}{cc}
	\displaystyle\sum_{i=q}^p(-1)^{\alpha_1+\cdots+\alpha_i+i}\frac{1}{q!(i-q)!}\frac{1}{(p-i)!(\ell(\alpha)-p)!} & \text{if }\EtO(\alpha)=\{p\},\OtE(\alpha)=\{q\}\text{ and }q<p\\
	\displaystyle\sum_{i=0}^p(-1)^{\alpha_1+\cdots+\alpha_i+i}\frac{1}{i!}\frac{1}{(p-i)!(\ell(\alpha)-p)!} & \text{if }\EtO(\alpha)=\{p\}\text{ and }\OtE(\alpha)=\emptyset\\
	\displaystyle\sum_{i=q}^{\ell(\alpha)}(-1)^{\alpha_1+\cdots+\alpha_i+i}\frac{1}{q!(i-q)!}\frac{1}{(\ell(\alpha)-i)!} & \text{if }\EtO(\alpha)=\emptyset\text{ and }\OtE(\alpha)=\{q\}\\
	\displaystyle\sum_{i=0}^{\ell(\alpha)}(-1)^{\alpha_1+\cdots+\alpha_i+i}\frac{1}{i!}\frac{1}{(\ell(\alpha)-i)!} & \text{if }\EtO(\alpha)=\OtE(\alpha)=\emptyset\\
	0 & \text{otherwise.}
	\end{array}\right.\\
	\end{align*}
	In the first case, the summation is alternating in sign because $\alpha_i$ are even for $q< i\leq p$. Therefore, the summation can be simplified to
	$$\frac{(-1)^q}{q!(\ell(\alpha)-p)!}\sum_{i=q}^p(-1)^{i}\frac{1}{(i-q)!(p-i)!}.$$
	It is easy to see that this summation must vanish, e.g., multiply it by $(p-q)!$ and use the binomial identity.
	
	The same argument can be applied to all other cases except the sub-case of 4 that $\alpha$ is an odd composition. When $\alpha$ is odd, all terms in the sum have a positive sign. Hence, we have
	
	$$\overline{\zeta_\QSym^{-1}}\zeta_\QSym(S_\alpha)=\sum_{i=0}^{\ell(\alpha)}\frac{1}{i!(\ell(\alpha)-i)!}=\frac{2^{\ell(\alpha)}}{\ell(\alpha)!}.$$
	
	Comparing with the formula of $\zeta_\QSym\circ\Phi$ completes the proof.
\end{proof}

Let $\{S_\alpha^*\}$ denote the basis of $\NSym$ that is dual to the shuffle basis of $\QSym$ i.e. we have the Hopf pairing $\langle S_\alpha^*,S_\beta\rangle=\delta_{\alpha,\beta}$. The basis $\{S_\alpha^*\}$ is called the {\it dual shuffle basis}. It is easy to see that $\NSym$ is freely generated by $\{S_1^*,S_2^*,\dots\}$ and $S_n^*$ is primitive for all $n$.

\begin{theorem}\label{thm:dual-shuffle}
	The forgetful projection $\pi$ maps the dual shuffle basis to the scaled power sum basis. More precisely, for all compositions $\alpha$,
	$$\pi(S_\alpha^*)=p_\alpha.$$
\end{theorem}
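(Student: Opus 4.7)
The plan is to reduce to the single-part case $\pi(S_n^*) = p_n$ and then compute using the factorization $\pi = I \circ \iota^*$. By Theorem~\ref{thm:shuffle-coproduct}, the coproduct of $\QSym$ on the shuffle basis is deconcatenation, and dualizing gives that the product of $\NSym$ on the dual shuffle basis is concatenation: $S_\alpha^* \cdot S_\beta^* = S_{\alpha\beta}^*$. Iterating, $S_\alpha^* = S_{\alpha_1}^* S_{\alpha_2}^* \cdots S_{\alpha_{\ell(\alpha)}}^*$. Since $\pi$ is an algebra morphism into the commutative algebra $\Sym$, we have $\pi(S_\alpha^*) = \prod_{i=1}^{\ell(\alpha)} \pi(S_{\alpha_i}^*)$, so it suffices to establish $\pi(S_n^*) = p_n$ for every $n \geq 1$.

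For the single-part case, I would use the factorization $\pi = I \circ \iota^*$, where $\iota:\Sym\to\QSym$ is the inclusion and $I:\Sym^*\to\Sym$ is the self-duality isomorphism with $I(p_n^*) = n\, p_n$. The strategy is to determine the functional $\iota^*(S_n^*) \in \Sym^*$ explicitly by evaluating on the $p$-basis. Observe first that $\OtE((n)) = \emptyset$, so $m_o((n)) = (n)$ and hence $S_n = M_n$; consequently $\iota(p_n) = \tfrac{1}{n}M_n = \tfrac{1}{n}S_n$. Using multiplicativity of $\iota$ and the shuffle product formula of Theorem~\ref{thm:shuffle-product},
$$\iota(p_\alpha) \;=\; \frac{1}{\alpha_1\cdots\alpha_{\ell(\alpha)}}\sum_{\gamma \in (\alpha_1)\shuffle\cdots\shuffle(\alpha_{\ell(\alpha)})} S_\gamma.$$
Every composition $\gamma$ appearing in this sum has length exactly $\ell(\alpha)$, so the pairing $\langle S_n^*, \iota(p_\alpha)\rangle$ vanishes unless $\ell(\alpha) = 1$ and $\alpha_1 = n$, in which case it equals $\tfrac{1}{n}$. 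Therefore $\iota^*(S_n^*) = \tfrac{1}{n}\, p_n^*$, and applying $I$ yields
$$\pi(S_n^*) \;=\; I\!\bigl(\tfrac{1}{n}\, p_n^*\bigr) \;=\; \tfrac{1}{n}\cdot n\, p_n \;=\; p_n.$$

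The heart of the argument is the explicit identification $\iota^*(S_n^*) = \tfrac{1}{n}\, p_n^*$, which rests on the convenient coincidence $S_n = M_n$ for one-part compositions together with the shuffle product formula. Given these, the argument is a short piece of formal bookkeeping with the Hopf pairing, and I do not foresee a serious obstacle beyond making sure the normalizations coming from $p_n = \tfrac{1}{n}M_n$ and $I(p_n^*) = n\, p_n$ cancel correctly.
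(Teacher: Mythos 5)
Your proposal is correct and follows essentially the same route as the paper: reduce to $\pi(S_n^*)=p_n$ by multiplicativity, use the factorization of $\pi$ through the adjoint of $\iota$, and conclude from the facts that $S_n=M_n=np_n$ and that a shuffle product of several $S$-basis elements only produces compositions of length greater than one. The only cosmetic difference is that you compute $\iota^*(S_n^*)$ in the $p^*$-basis and then apply $I$, whereas the paper pairs $\pi(S_n^*)$ directly against $p_\alpha^*$ via $\pi=(\iota\circ I)^*$; these are the same calculation.
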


\begin{proof}
	Since both $\{S_\alpha^*\}$ and $\{p_\alpha\}$ are multiplicative, it suffices to show that $\pi(S_n^*)=p_n$.
	
	Since $\pi=(\iota\circ I)^*$, we have $\langle p_\alpha^*,\pi(S_n^*)\rangle=\langle S_n^*,\iota\circ I(p_\alpha^*)\rangle=\langle S_n^*,(\prod_i\alpha_i)p_\alpha \rangle$.
	
	Recall that $p_n=\cfrac{1}{n}M_n=\cfrac{1}{n}S_n$, hence, $\langle p_n^*,\pi(S_n^*)\rangle =\langle S_n^*,np_n\rangle=1$. Moreover, if $\alpha$ has more than one parts, then $\langle S_n^*, p_\alpha\rangle=\cfrac{1}{\prod_i\alpha_i}\langle S_n^*,S_{\alpha_1}\cdots S_{\alpha_{\ell(\alpha)}}\rangle=0$ because the shuffle product of the elments in the $S$-basis does not contain the term $S_n$. Then, we have
	$$\langle p_\alpha^*,\pi(S_n^*)\rangle=\left\{\begin{array}{cc}
		1 & \text{if }\alpha=(n)\\
		0 & \text{otherwise}.
	\end{array}\right.$$
	Therefore, $\pi(S_n^*)=\displaystyle\sum_\alpha\langle p_\alpha^*,\pi(S_n^*)\rangle p_\alpha=p_n$.
\end{proof}

\section{The shuffle, tensor, and symmetric algebras}\label{sec:Hopfs}

In this section, we revisit shuffle, tensor, and symmetric algebras. Since the shuffle basis $\{S_\alpha: \alpha \text{~is a composition}\}$  gives an eigen basis for $\QSym$, and moreover, $\{S_\alpha: \alpha \text{~is an odd composition}\}$ is a basis for $\Pi$, we can easily extend $\Theta_\QSym$ to shuffle, tensor, and symmetric algebras.  

\subsection{The shuffle algebra}

Let $V$ be a vector space with a countable basis and
$$V^{\otimes n}=\underbrace{V\otimes V \otimes \dots \otimes V}_{\text{$n$ times}}.$$

As a vector space, the shuffle algebra on $V$ is
$$\rmS(V)=\bigoplus_{n\geq 0}V^{\otimes n}.$$

We fix a basis $A=\{v_1,v_2,\dots\}$ of $V$ throughout this paper. For convenience, we write $v_1\otimes v_2\otimes \cdots \otimes v_n$ as $v_1v_2\cdots v_n$. 
Then a basis for $\rmS(V)$ contains all elements of the form $v_{\alpha_1}v_{\alpha_2}\cdots v_{\alpha_\ell}$. For a composition $\alpha=(\alpha_1,\dots,\alpha_\ell)$, we write $v_{\alpha_1}v_{\alpha_2}\cdots v_{\alpha_\ell}$ as $v_\alpha$. Essentially, the elements of $\rmS(V)$ are linear combinations of the words on the alphabet $A$. Note that $V^{\otimes 0}$ is generated by the empty word.

We now present the Hopf structure of $\rmS(V)$ on the basis elements, and extend it linearly.

The product of $\rmS(V)$ is given by the shuffle of the words, that is,

$$v_\alpha\cdot v_\beta=\sum_{\gamma\in\alpha\shuffle\beta}v_\gamma.$$

The coproduct is given by the deconcatenation of words, that is, 
$$\Delta(v_\alpha)=\sum_{i=0}^{\ell(\alpha)} v_{(\alpha_1,\dots,\alpha_i)}\otimes v_{(\alpha_{i+1},\dots,\alpha_{\ell(\alpha)})}.$$ 

It is well-known, e.g. in \cite{BS14}, that the antipode formula for $S(V)$ is given by
$$\mathcal{S}(v_{(\alpha_1,\dots,\alpha_\ell(\alpha))})=(-1)^{\ell(\alpha)}v_{(\alpha_{\ell(\alpha)},\dots,\alpha_1)}.$$

When $V$ is equipped with a grading such that the basis elements $v_i$ are homogeneous, the degree of $v_{(\alpha_1,\dots,\alpha_n)}$ in $V^{\otimes n}$ is 
$${\rm deg}(v_{\alpha_1})+{\rm deg}(v_{\alpha_2})+\dots+{\rm deg}(v_{\alpha_n}).$$
With such a grading, $\rmS(V)$ becomes a graded connected Hopf algebra. In the rest of this paper, we always assume that $V$ has such a grading and moreover, the set $\{\alpha:\deg(v_\alpha)=n\}$ is finite for all $n$.

Consider the following graded Hopf algebras morphism, $$\begin{array}{cccc}
	\Phi_{\rmS(V)}:&\rmS(V)&\rightarrow& \QSym\\
	&v_{(\alpha_1,\dots,\alpha_\ell)}& \mapsto & S_{(\deg(v_{\alpha_1}),\dots,\deg(v_{\alpha_\ell}))}.
\end{array}$$
The fact that $\Phi_{\rmS(V)}$ is a graded Hopf morphism follows directly from the Hopf structure of the shuffle basis in Theorem \ref{thm:shuffle-product} and \ref{thm:shuffle-coproduct}.

The \textit{canonical} character of $\rmS(V)$, denoted by $\zeta_{\rmS(V)}$ is defined to be the map composition
$$\zeta_{\rmS(V)}=\zeta_{\QSym}\circ\Phi_{\rmS(V)}.$$

More explicitly, let $\alpha=(\alpha_1,\dots,\alpha_\ell)$ and $(\beta_1,\dots,\beta_\ell)=(\deg(v_{\alpha_1}),\dots,\deg(v_{\alpha_\ell}))$, then
$$\zeta_{\rmS(V)}(v_\alpha)=\left\{\begin{array}{cc}
\cfrac{1}{p!(\ell-p)!} & \text{if }\EtO(\beta)=\{p\}\text{ and }\OtE(\beta)=\emptyset\\
\cfrac{1}{\ell!} & \text{if }\EtO(\beta)=\OtE(\beta)=\emptyset\\
0 & \text{otherwise}.
\end{array}\right..$$

In particular, $\zeta_{\rmS(V)}(v_i)=1$ for all $i$.

\subsection{The tensor algebra}
As a vector space, the tensor algebra $\rmT(V)$ is the same as $\rmS(V)$. The Hopf structure of $\rmT(V)$ can be expressed, on basis elements and extended linearly, as follows.

The product is given by the concatenation of compositions, that is,
$$v_\alpha\cdot v_\beta=v_{\alpha\beta}.$$

The coproduct is given by the deshuffle of composition, that is,
$$\Delta(v_{(\alpha_1,\dots,\alpha_\ell)})=\sum_{J\subseteq[\ell]}v_{\alpha_J}\otimes v_{\alpha_{[\ell]\setminus J}}$$
where $[\ell]=\{1,2,\dots,\ell\}$, and if $J=\{i_1,i_2,\dots,i_k\}$ with $i_1<i_2<\cdots<i_k$, then $\alpha_J=(\alpha_{i_1},\dots,\alpha_{i_k})$.

We can see that $v_i$'s are primitive elements that freely generate $\rmT(V)$.

The antipode formula is given by
$$\mathcal{S}(v_{(\alpha_1,\dots,\alpha_\ell(\alpha))})=(-1)^{\ell(\alpha)}v_{(\alpha_{\ell(\alpha)},\dots,\alpha_1)}.$$

The tensor algebra and shuffle algebra are graded dual of each other under the Hopf pairing $\langle -,-\rangle:\rmS(V)\times\rmT(V)\to\mathbb{C}$ given by $\langle v_\alpha,v_\beta\rangle=\delta_{\alpha,\beta}$ where $\delta_{\alpha,\beta}$ is the Kronecker delta.

Consider the following graded Hopf algebra morphism,
$$\begin{array}{cccc}
\Phi_{\rmT(V)}:&\rmT(V)&\rightarrow& \Sym\\
 & v_i & \mapsto & p_{\deg(v_i)}.
\end{array}$$
where $p$ is the scaled power sum symmetric function. The fact that $\Phi_{\rmT(V)}$ is a graded Hopf morphism follows from the fact that the scaled power sum functions $p_n$ are both multiplicative and primitive.

The \textit{canonical} character of $\rmT(V)$, denoted by $\zeta_{\rmT(V)}$ is defined to be the map composition
$$\zeta_{\rmT(V)}=\zeta_{\Sym}\circ\Phi_{\rmT(V)}.$$

Equivalently, $\zeta_{\rmT(V)}$ can be defined, on generators and extended linearly and multiplicatively, as
$$\begin{array}{cccc}
\zeta_{\rmT(V)}:&\rmT(V)&\rightarrow& \mathbb{C}\\
& v_i & \mapsto & \cfrac{1}{\deg(v_i)}.
\end{array}$$

\subsection{The symmetric algebra}

Let ${\rm I}(V)$ be the commutator ideal of $\rmT(V)$ i.e. ${\rm I}(V)=\langle v_iv_j-v_jv_i \rangle$ for all pairs of basis elements. Then, the symmetric algebra on $V$ is defined as
$$\Sym(V)=\frac{\rmT(V)}{{\rm I}(V)}=\mathbb{C}\text{-span}\{\overline{v_\alpha}=v_\alpha+{\rm I}(V)\}.$$
We will use $\overline{v_\alpha}$ to denote the equivalence class of $v$ throughout this paper.

The quotient space $\Sym(V)$ is a commutative and cocommutative Hopf algebra. Its antipode $\mathcal{S}$ maps each $\overline{v_{\alpha}}$ to $(-1)^{\ell(\alpha)} \overline{v_{\alpha}}$. Since the ideal ${\rm I}(V)$ is homogeneous, $\Sym(V)$ inherits a grading from $\rmT(V)$ given by $\deg(\overline{v_\alpha})=\deg(v_\alpha)$.

It is easy to see that ${\rm I}(V)\subseteq\ker\Phi_{\rmT(V)}$. We have the induced Hopf morphism defined on generators and extended linearly and multiplicatively, as
$$\begin{array}{cccc}
\Phi_{\Sym(V)}:&\Sym(V)&\rightarrow& \Sym\\
& \overline{v_i} & \mapsto & p_{\deg(v_i)}.
\end{array}$$

The \textit{canonical} character of $\Sym(V)$, denoted by $\zeta_{\Sym(V)}$ is defined to be the induced character
$$\begin{array}{cccc}
\zeta_{\Sym(V)}:&\Sym(V)&\rightarrow& \mathbb{C}\\
& \overline{v_i} & \mapsto & \cfrac{1}{\deg(v_i)}.
\end{array}$$

For a composition $\alpha$, let $m_i$ be the number of parts in $\alpha$ that are equal to $i$ i.e. $m_i=|\{k:\alpha_k=i\}|$, and let $z_\alpha=m_1!1^{m_1}m_2!2^{m_2}\cdots$. 

\begin{theorem}\label{thm:duality}
	The symmetric algebra $\Sym(V)$ is self-dual under the Hopf pairing
	$$\left\langle \cfrac{\deg(v_\alpha)!}{\sqrt{z_\alpha}}\overline{v_\alpha},\cfrac{\deg(v_\beta)!}{\sqrt{z_\beta}}\overline{v_\beta} \right\rangle=\delta_{{\rm sort}(\alpha),{\rm sort}(\beta)}$$
	where $\deg(v_\alpha)!=\deg(\alpha_1)!\deg(\alpha_2)!\cdots\deg(\alpha_{\ell(\alpha)})!$, $\deg(v_\beta)!=\deg(\beta_1)!\deg(\beta_2)!\cdots\deg(\beta_{\ell(\beta)})!$ and sort$(\alpha)$ is the composition obtained by rearranging parts of $\alpha$ in decreasing order.
\end{theorem}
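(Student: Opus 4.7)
The approach is to exploit the fact that $\Sym(V)$ is a polynomial Hopf algebra on primitive generators: since each $v_i$ is primitive in $\rmT(V)$ and ${\rm I}(V)$ is a Hopf ideal, each $\overline{v_i}$ is primitive in $\Sym(V)$, and these elements freely generate $\Sym(V)$ as a commutative algebra. For any Hopf algebra of this form, a graded Hopf pairing is uniquely determined by its values on pairs of generators, and extends to the whole algebra via a permanent-type formula. The plan is therefore to specify the pairing on generators, extend by the Hopf-pairing axioms, and verify that the result matches the identity in the statement.

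First I would check that the prescribed formula is well-defined on the quotient: both $z_\alpha$ and $\deg(v_\alpha)!$ depend only on the multiset of parts of $\alpha$, and $\overline{v_\alpha}=\overline{v_\beta}$ precisely when ${\rm sort}(\alpha)={\rm sort}(\beta)$. The desired identity is thus equivalent to $\langle \overline{v_\alpha},\overline{v_\beta}\rangle = \tfrac{z_\alpha}{(\deg(v_\alpha)!)^2}\,\delta_{{\rm sort}(\alpha),{\rm sort}(\beta)}$. I would then set $\langle \overline{v_i},\overline{v_j}\rangle := \tfrac{i}{(\deg(v_i)!)^2}\delta_{i,j}$ on generators. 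Using the axiom $\langle ab,c\rangle = \langle a\otimes b,\Delta(c)\rangle$ together with $\Delta(\overline{v_{\beta_1}}\cdots\overline{v_{\beta_m}}) = \prod_k(1\otimes\overline{v_{\beta_k}} + \overline{v_{\beta_k}}\otimes 1)$ (valid because $\Sym(V)$ is commutative and each $\overline{v_{\beta_k}}$ is primitive), an induction on $\ell(\alpha)$ yields the permanent identity
$$\langle \overline{v_\alpha},\overline{v_\beta}\rangle \;=\; \delta_{\ell(\alpha),\ell(\beta)}\sum_{\sigma\in\mathfrak{S}_{\ell(\alpha)}}\prod_{t=1}^{\ell(\alpha)}\langle \overline{v_{\alpha_t}},\overline{v_{\beta_{\sigma(t)}}}\rangle.$$
Plugging in the diagonal values, this permanent vanishes unless ${\rm sort}(\alpha)={\rm sort}(\beta)$, in which case it evaluates to $\prod_i m_i!\cdot\prod_t \tfrac{\alpha_t}{(\deg(v_{\alpha_t})!)^2} = \tfrac{z_\alpha}{(\deg(v_\alpha)!)^2}$, exactly matching the target.

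The remaining Hopf-pairing axioms (unit--counit compatibility and antipode intertwining) follow automatically from the multiplicative extension on primitive generators, and nondegeneracy is immediate from the diagonal form on the basis $\bigl\{\tfrac{\deg(v_\alpha)!}{\sqrt{z_\alpha}}\overline{v_\alpha}\bigr\}$ indexed by partitions; together these give self-duality. The main technical obstacle is the inductive derivation of the permanent expansion, where one must carefully track the multiplicities arising when the commutative monomial $\overline{v_\alpha}$ on the left (in which the ordering of factors is lost) is matched against the full symmetric-group expansion of the multi-primitive coproduct of $\overline{v_\beta}$ on the right.
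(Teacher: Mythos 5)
Your proof is correct, but it takes a genuinely different route from the paper. The paper's argument is essentially a one-line transport: it invokes the (ungraded) Hopf isomorphism $\Sym(V)\to\Sym$, $\overline{v_\alpha}\mapsto p_\alpha$, and pulls back the classical self-duality of $\Sym$ under the power-sum pairing $\left\langle \frac{\prod_i\alpha_i}{\sqrt{z_\alpha}}p_\alpha,\frac{\prod_i\beta_i}{\sqrt{z_\beta}}p_\beta\right\rangle=\delta_{\mathrm{sort}(\alpha),\mathrm{sort}(\beta)}$, leaving the reader to reconcile normalizations. You instead build the pairing intrinsically: declare it on the primitive generators as $\langle\overline{v_i},\overline{v_j}\rangle=\frac{i}{(\deg(v_i)!)^2}\delta_{i,j}$, extend by the Hopf-pairing axioms to the permanent formula $\langle\overline{v_\alpha},\overline{v_\beta}\rangle=\delta_{\ell(\alpha),\ell(\beta)}\sum_{\sigma}\prod_t\langle\overline{v_{\alpha_t}},\overline{v_{\beta_{\sigma(t)}}}\rangle$, and check that the diagonal entries come out to $\frac{z_\alpha}{(\deg(v_\alpha)!)^2}$; your arithmetic here is right, since $z_\alpha=\prod_i m_i!\cdot\prod_t\alpha_t$. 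What your approach buys is self-containedness and an exact match with the normalization $\deg(v_\alpha)!/\sqrt{z_\alpha}$ appearing in the statement (note that a literal transport of the paper's power-sum pairing would produce $\prod_i\alpha_i$ in place of $\deg(v_\alpha)!$, so your direct computation is arguably the cleaner way to land on the stated constants; the two normalizations give different but equally valid self-dual Hopf pairings, since any symmetric bilinear form on the space of primitives extends uniquely to a Hopf pairing of the symmetric algebra). What the paper's approach buys is brevity and the conceptual point that $\Sym(V)$ is just $\Sym$ with regraded generators. The one step you should make sure is actually written out, rather than only flagged as an obstacle, is the inductive derivation of the permanent identity from $\langle ab,c\rangle=\langle a\otimes b,\Delta(c)\rangle$ and the multi-primitive coproduct; this is standard but is the load-bearing step of your argument.
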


It is not hard to see that the Hopf pairing is well-defined, i.e., does not depend on the choice of representative. The theorem above follows from the isomorphism between  Hopf algebras
$$\begin{array}{ccc}
\Sym(V) & \to & \Sym\\
\overline{v_\alpha} & \mapsto & p_\alpha
\end{array}$$
and the well-known Hopf pairing on scaled power sum basis
$$\left\langle \cfrac{\prod_i\alpha_i}{\sqrt{z_\alpha}}p_\alpha,\cfrac{\prod_i\beta_i}{\sqrt{z_\beta}}p_\beta \right\rangle=\delta_{{\rm sort}(\alpha),{\rm sort}(\beta)}.$$

\subsection{The Hopf subalgebras of odd and even elements}

We call an element 
$v_{(\alpha_1,\dots,\alpha_n)}$ of $V^{\otimes n}$ {\it odd} if the degree of each $v_{\alpha_i}$ is odd, and {\it even} if the degree of each $v_{\alpha_i}$ is even.

Let the subspace of odd elements of $V$ be
$$\rmO(V)=\mathbb{C}\text{-span}\{v_i: \deg(v_i)\text{~is odd}  \},$$
and let the subspace of even elements be
$$\rmE(V)=\mathbb{C}\text{-span}\{v_i: \deg(v_i)\text{~is even}  \}.$$

\begin{proposition}
	(1) The shuffle algebras
	$$\rmS(\rmO(V))=\mathbb{C}\text{\rm-span}\{v_\alpha: v_\alpha \text{~is odd}  \}$$
	and
	$$\rmS({\rm E}(V))=\mathbb{C}\text{\rm-span}\{v_\alpha: v_\alpha \text{~is even}  \}$$ 
	are Hopf subalgebras of $\rmS(V)$.
	
	(2) The tensor algebras
	$$\rmT(\rmO(V))=\mathbb{C}\text{\rm-span}\{v_\alpha: v_\alpha \text{~is odd}  \}$$
	and
	$$\rmT({\rm E}(V))=\mathbb{C}\text{\rm-span}\{v_\alpha: v_\alpha \text{~is even}  \}$$ 
	are Hopf subalgebras of $\rmT(V)$.
	
	(3) The symmetric algebras
	$$\Sym(\rmO(V))=\mathbb{C}\text{\rm-span}\{\overline{v_\alpha}: v_\alpha \text{~is odd}  \}$$
	and
	$$\Sym({\rm E}(V))=\mathbb{C}\text{\rm-span}\{\overline{v_\alpha}: v_\alpha \text{~is even}  \}$$ 
	are Hopf subalgebras of $\Sym(V)$.
\end{proposition}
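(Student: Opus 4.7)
The plan is to give a uniform argument across all three parts by exploiting the observation that, on the monomial basis $\{v_\alpha\}$ (or $\{\overline{v_\alpha}\}$ in the symmetric case), every structural operation acts on the underlying word $v_{\alpha_1}\cdots v_{\alpha_\ell}$ only by rearranging or partitioning its sequence of letters, never by introducing new letters or altering existing ones. Since odd-ness (resp. even-ness) is a property attached to each individual letter $v_{\alpha_i}$, it is automatically preserved under any such operation. The unit $v_{()}$ lies trivially in each of the six claimed subspaces, so only closure under product, coproduct, and antipode needs to be checked.

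First I would dispatch $\rmS(V)$ in part (1). The shuffle product $v_\alpha \cdot v_\beta = \sum_{\gamma\in\alpha\shuffle\beta} v_\gamma$ produces words whose letter multiset is the disjoint union of those of $v_\alpha$ and $v_\beta$; hence an odd--odd pair (resp. even--even pair) yields only odd (resp. even) summands. Deconcatenation splits the letter sequence in place, and the antipode $\mathcal{S}(v_\alpha) = (-1)^{\ell(\alpha)} v_{(\alpha_\ell,\ldots,\alpha_1)}$ merely reverses it. All three operations thus restrict to $\rmS(\rmO(V))$ and $\rmS(\rmE(V))$. For $\rmT(V)$ in (2), the argument is identical after substituting concatenation for shuffle in the product (which trivially preserves parity of every letter) and the deshuffle coproduct $\Delta(v_{(\alpha_1,\ldots,\alpha_\ell)}) = \sum_{J\subseteq[\ell]} v_{\alpha_J} \otimes v_{\alpha_{[\ell]\setminus J}}$ in place of deconcatenation (which partitions the letters into two subsequences).

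For (3), the extra point to verify is that the claimed subspaces are well-defined on classes modulo ${\rm I}(V)$. Because the generating relations $v_iv_j - v_jv_i$ of ${\rm I}(V)$ preserve the multiset of letters, the parity profile of $\overline{v_\alpha}$ depends only on its equivalence class. The induced product $\overline{v_\alpha}\cdot\overline{v_\beta} = \overline{v_{\alpha\beta}}$, the coproduct on $\Sym(V)$ (descended from the deshuffle coproduct on $\rmT(V)$), and the antipode $\overline{v_\alpha}\mapsto(-1)^{\ell(\alpha)}\overline{v_\alpha}$ then preserve parity by the same letter-based bookkeeping as in $\rmT(V)$.

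There is no substantive obstacle: the proposition amounts to combinatorial bookkeeping with the letters of each monomial. The only mildly delicate point is the well-definedness check in (3), which reduces immediately to the observation that the generators of ${\rm I}(V)$ do not alter the multiset of letters, and hence do not alter the parity profile.
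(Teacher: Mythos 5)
Your proof is correct and follows essentially the same route as the paper, which simply observes that all six subspaces are closed under the multiplication and comultiplication operations; you have merely spelled out the letter-by-letter bookkeeping (and the antipode and well-definedness checks) that the paper leaves implicit.
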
 
\begin{proof}
	It follows from this fact that all these subspaces are closed under product and coproduct operations.
\end{proof} 

We call $\rmS(\rmO(V))$, $\rmT(\rmO(V))$ and $\Sym(\rmO(V))$ the {\it Hopf subalgebras of odd elements} of $\rmS(V)$, $\rmT(V)$ and $\Sym(V))$, respectively. We call $\rmS(\rmE(V))$, $\rmT(\rmE(V))$ and $\Sym(\rmE(V))$ the {\it Hopf subalgebras of even elements} of $\rmS(V)$, $\rmT(V)$ and $\Sym(V)$, respectively. In this paper, we mostly study the Hopf subalgebras of odd elements since later we show that they are the images of certain theta maps.

\begin{corollary}\label{coro:duality}
	The Hopf algebras $\rmS(\rmO(V))$ and $\rmT(\rmO(V))$ are graded dual of each other with Hopf pairing $\langle v_\alpha,v_\beta \rangle=\delta_{\alpha,\beta}$ where $\delta_{\alpha,\beta}$ is the Kronecker delta. Additionally, $\Sym(\rmO(V))$ is a self-dual Hopf algebra under the Hopf pairing $\left\langle \cfrac{\deg(v_\alpha)!}{\sqrt{z_\alpha}}\overline{v_\alpha},\cfrac{\deg(v_\beta)!}{\sqrt{z_\beta}}\overline{v_\beta} \right\rangle=\delta_{{\rm sort}(\alpha),{\rm sort}(\beta)}.$
\end{corollary}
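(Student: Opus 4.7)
The plan is to derive both assertions from results already established in the excerpt, namely the graded duality $\langle v_\alpha, v_\beta\rangle=\delta_{\alpha,\beta}$ between $\rmS(V)$ and $\rmT(V)$ stated in the tensor algebra subsection, and the self-duality of $\Sym(V)$ given by Theorem \ref{thm:duality}. In both cases, the corollary should reduce to observing that the relevant pairing restricts cleanly to the odd subalgebras.

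For the first statement, I would argue as follows. By the preceding proposition, $\rmS(\rmO(V))\subseteq\rmS(V)$ and $\rmT(\rmO(V))\subseteq\rmT(V)$ are graded Hopf subalgebras, and both are spanned by the same indexing set $\{v_\alpha : v_\alpha \text{ odd}\}$. The Hopf pairing on $\rmS(V)\times\rmT(V)$ restricts to a pairing on $\rmS(\rmO(V))\times\rmT(\rmO(V))$ whose matrix on the distinguished basis is again the identity, so it is non-degenerate. To promote this non-degenerate pairing to a Hopf pairing between the subalgebras, it suffices to note that the pairing axioms $\langle xy,z\rangle=\langle x\otimes y,\Delta z\rangle$ and $\langle x,yz\rangle=\langle \Delta x,y\otimes z\rangle$ are inherited from the ambient algebras, since the product on odd elements is defined by the restriction of the ambient product, and the coproduct of an odd $v_\alpha$ lies in $\rmO(V)^{\otimes \ell}\otimes \rmO(V)^{\otimes \ell}$ in both the shuffle and tensor cases — deconcatenation and deshuffle preserve the property that every tensor factor has odd degree part-wise.

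For the second statement, apply exactly the same reasoning to Theorem \ref{thm:duality}: the Hopf subalgebra $\Sym(\rmO(V))\subseteq \Sym(V)$ is spanned by $\{\overline{v_\alpha} : v_\alpha \text{ odd}\}$, and the Hopf pairing with scaling factors $\deg(v_\alpha)!/\sqrt{z_\alpha}$ restricts to a pairing on $\Sym(\rmO(V))\times \Sym(\rmO(V))$ that is block-diagonal, returning $\delta_{\sort(\alpha),\sort(\beta)}$ when both $v_\alpha$ and $v_\beta$ are odd. Non-degeneracy is immediate, and the Hopf pairing axioms descend because $\Sym(\rmO(V))$ is closed under both product and coproduct.

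I expect no real obstacle beyond routine verification; the only step demanding a small amount of care is checking that the coproducts on odd elements actually land in the tensor product of the odd subspace with itself, which reduces in each case to inspecting the explicit formulas for $\Delta$ already given (deconcatenation for $\rmS(V)$, deshuffle for $\rmT(V)$, and the induced coproduct on $\Sym(V)$), together with the fact that any subcomposition of an odd composition is odd.
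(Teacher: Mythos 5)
Your proposal is correct and matches the paper's own (very brief) proof: both argue that the ambient Hopf pairings between $\rmS(V)$ and $\rmT(V)$, and on $\Sym(V)$, restrict to the odd Hopf subalgebras, where non-degeneracy is immediate from the dual bases and the pairing axioms are inherited because the subalgebras are closed under product and coproduct. The extra care you take in checking that coproducts of odd elements land in the tensor square of the odd subspace is exactly the content of the proposition immediately preceding the corollary, so nothing is missing.
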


\begin{proof}
	Both $\rmS(\rmO(V))$ and $\rmT(\rmO(V))$ are Hopf subalgebras, their basis have the same index set and the Hopf pairing follows from the duality between $\rmS(V)$ and $\rmT(V)$. Similarly, the self-duality and Hopf pairing of $\Sym(O(V))$ are inherited from $\Sym(V)$, as in Theorem \ref{thm:duality}.
\end{proof}

We then present the connection between Hopf subalgebras of odd and even elements with the odd and even Hopf subalgebras.

\begin{proposition}
	We have the following inclusions
	\begin{enumerate}
		\item $\rmS(\rmO(V)) \subseteq \sfS_{-}(\rmS(V),\zeta_{\rmS(V)})$, $\rmS(\rmE(V)) \subseteq \sfS_{+}(\rmS(V),\zeta_{\rmS(V)})$,
		\item $\rmT(\rmO(V)) \subseteq \sfS_{-}(\rmT(V),\zeta_{\rmT(V)})$, $\rmT(\rmE(V)) \subseteq \sfS_{+}(\rmT(V),\zeta_{\rmT(V)})$,
		\item $\Sym(\rmO(V)) \subseteq \sfS_{-}(\Sym(V),\zeta_{\Sym(V)})$, $\Sym(\rmE(V)) \subseteq \sfS_{+}(\Sym(V),\zeta_{\Sym(V)})$.
	\end{enumerate}
\end{proposition}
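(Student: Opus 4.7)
The plan is to apply the criterion in Theorem \ref{thm:ABSodd}: a homogeneous element $h\in\mathsf{H}$ lies in $\sfS_-(\mathsf{H},\zeta_\mathsf{H})$ (resp.~$\sfS_+(\mathsf{H},\zeta_\mathsf{H})$) if and only if
$$\bigl(\mathrm{id}\otimes(\overline{\zeta_\mathsf{H}^{-1}}-\zeta_\mathsf{H})\otimes\mathrm{id}\bigr)\circ\Delta^{(2)}(h)=0$$
(resp.~with $\overline{\zeta_\mathsf{H}}-\zeta_\mathsf{H}$ in the middle slot). I will verify this criterion on the distinguished basis of each odd/even Hopf subalgebra, namely on $v_\alpha$ (or $\overline{v_\alpha}$) with $v_\alpha$ odd (resp.~even).

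The first step is a structural observation. In each of the three algebras, $\Delta^{(2)}$ merely redistributes the parts $\alpha_1,\dots,\alpha_{\ell(\alpha)}$: in $\rmS(V)$ it is iterated deconcatenation $\Delta^{(2)}(v_\alpha)=\sum_{\alpha=\beta_1\beta_2\beta_3} v_{\beta_1}\otimes v_{\beta_2}\otimes v_{\beta_3}$; in $\rmT(V)$ it is iterated deshuffle indexed by ordered set partitions $[\ell(\alpha)]=J_1\sqcup J_2\sqcup J_3$; and in $\Sym(V)$ it is the quotient image of the latter. In every summand the middle factor $v_{\beta_2}$ (or $\overline{v_{\beta_2}}$) is built from a sub-multiset of the degrees $\{\deg(v_{\alpha_i})\}$. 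Consequently, if $v_\alpha$ is odd (resp.~even), then every $v_{\beta_2}$ appearing is again odd (resp.~even), so it suffices to prove $(\overline{\zeta_\mathsf{H}^{-1}}-\zeta_\mathsf{H})(v_\gamma)=0$ for all odd $v_\gamma$, and $(\overline{\zeta_\mathsf{H}}-\zeta_\mathsf{H})(v_\gamma)=0$ for all even $v_\gamma$.

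The second step is this character computation. For $\rmT(V)$ and $\Sym(V)$, the canonical character is multiplicative, and the antipode formula $\mathcal{S}(v_\alpha)=(-1)^{\ell(\alpha)}v_{\alpha^r}$ (with $\zeta_\mathsf{H}(v_{\alpha^r})=\zeta_\mathsf{H}(v_\alpha)$ by symmetry under reversal) yields $\zeta_\mathsf{H}^{-1}(v_\alpha)=(-1)^{\ell(\alpha)}\zeta_\mathsf{H}(v_\alpha)$, hence $\overline{\zeta_\mathsf{H}^{-1}}(v_\alpha)=(-1)^{\deg(v_\alpha)+\ell(\alpha)}\zeta_\mathsf{H}(v_\alpha)$. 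The sign is $+1$ whenever $\deg(v_\alpha)\equiv\ell(\alpha)\pmod 2$, which holds precisely when $v_\alpha$ is odd, and an analogous parity check gives $\overline{\zeta_\mathsf{H}}(v_\alpha)=\zeta_\mathsf{H}(v_\alpha)$ for even $v_\alpha$. For $\rmS(V)$ the basis $\{v_\alpha\}$ is not multiplicative, so instead we use $\zeta_{\rmS(V)}=\zeta_\QSym\circ\Phi_{\rmS(V)}$ and the graded Hopf-morphism property of $\Phi_{\rmS(V)}$ to get $\overline{\zeta_{\rmS(V)}^{-1}}=\overline{\zeta_\QSym^{-1}}\circ\Phi_{\rmS(V)}$; the required equality then reduces, via $\Phi_{\rmS(V)}(v_\gamma)=S_{(\deg v_{\gamma_1},\dots,\deg v_{\gamma_\ell})}$, to the already computed formulas \eqref{eq:3.1} and \eqref{eq:3.2}, combined with the parity identity $|\deg\gamma|\equiv\ell(\gamma)\pmod 2$ that holds exactly on odd $v_\gamma$.

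Combining the two steps, the middle evaluation kills every term of $\Delta^{(2)}(v_\alpha)$ when $v_\alpha$ is odd (and similarly in the even case), so the criterion is satisfied and the six inclusions follow. The only delicate point is the sign accounting in the $\rmS(V)$ case; this however is precisely the parity calculation already carried out during the proof of Theorem \ref{thm:theta}, so no new computation is required.
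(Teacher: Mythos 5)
Your proof is correct. For the shuffle-algebra inclusions your route coincides with the paper's: both observe that deconcatenation keeps every middle tensor factor odd (resp.\ even) and then reduce, via $\Phi_{\rmS(V)}$ and the formulas \eqref{eq:3.1}--\eqref{eq:3.2}, to the parity identity $|\beta|\equiv\ell(\beta)\pmod 2$ for odd $\beta$. For the tensor and symmetric algebras, however, you diverge: the paper invokes Theorem \ref{thm:ABSodd} to know that $\sfS_{\pm}$ is a subalgebra, notes that $\rmT(\rmO(V))$ (resp.\ $\Sym(\rmO(V))$) is generated by the primitive generators $v_i$ of odd degree, and checks the criterion only on those generators using $\mathcal{S}(v_i)=-v_i$; you instead verify the middle-slot criterion on \emph{every} basis element $v_\alpha$, using multiplicativity and reversal-invariance of the canonical character together with the antipode formula $\mathcal{S}(v_\alpha)=(-1)^{\ell(\alpha)}v_{(\alpha_{\ell},\dots,\alpha_1)}$ to get $\overline{\zeta^{-1}}(v_\alpha)=(-1)^{\deg(v_\alpha)+\ell(\alpha)}\zeta(v_\alpha)$. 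Your version is more uniform across the three cases and avoids leaning on the subalgebra property of $\sfS_{\pm}$, at the cost of the extra (routine) check that deshuffling preserves oddness of the middle factor; the paper's generator argument is shorter but specific to the free/primitive presentation. One cosmetic quibble: the congruence $\deg(v_\alpha)\equiv\ell(\alpha)\pmod 2$ does not hold ``precisely'' when $v_\alpha$ is odd (it also holds, e.g., for an even word of even length), but you only use the implication in the direction you need, so nothing breaks.
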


\begin{proof}
	Recall from Theorem \ref{thm:ABSodd} that $v_\alpha\in \sfS_-(\rmS(V),\zeta_{\rmS(V)})$ if and only if 
	$$({\rm id} \otimes \overline{\zeta_{\rmS(V)}^{-1}} - \zeta_{\rmS(V)} \otimes {\rm id})\circ \Delta^{(2)}(v_\alpha)=0.$$
	
	Proof of (1). Fix a word $v_\alpha=v_{\alpha_1}\cdots v_{\alpha_{\ell}}$ such that $\deg(v_{\alpha_i})=\beta_i$ is odd for all $i$, let $\beta=(\beta_1,\dots,\beta_\ell)$. Since the coproduct in $\rmS(V)$ is given by deconcatenation, it suffices to show that $\left(\overline{\zeta_{\rmS(V)}^{-1}} - \zeta_{\rmS(V)}\right)(v_\alpha)=0$.
	
	By definition, $\zeta_{\rmS(V)}=\zeta_\QSym\circ\Phi_{\rmS(V)}$ and $\Phi_{\rmS(V)}(v_\alpha)=S_\beta$, we have
	$$\left(\overline{\zeta_{\rmS(V)}^{-1}} - \zeta_{\rmS(V)}\right)(v_\alpha)=\overline{\zeta_\QSym^{-1}}(S_\beta)-\zeta_\QSym(S_\beta)=\frac{(-1)^{\deg(v_\alpha)+\ell}}{\ell!}-\frac{1}{\ell!}=0$$
	where the second to last equality follows from equation \ref{eq:3.1} and \ref{eq:3.2}. The even parts can be proved using similar arguments.
	
	Proof of (2). Since $\rmT(V)$ is freely generated by $\{v_i\}$, and by Theorem \ref{thm:ABSodd}, $S_-(\rmT(V),\zeta_{\rmT(V)})$ is an algebra, it suffices to show that $v_i\in S_-(\rmT(V),\zeta_{\rmT(V)})$ for all $v_i$ with odd degree. Since $v_i$ is primitive in $\rmT(V)$, we have $S(v_i)=-v_i$ and hence $\overline{\zeta_{\rmT(V)}^{-1}}(v_i)=\zeta_{\rmT(V)}(v_i)$. Therefore, $\left(\overline{\zeta_{\rmT(V)}^{-1}} - \zeta_{\rmT(V)}\right)(v_i)=0$ and $v_i\in S_-(\rmT(V),\zeta_{\rmT(V)})$ follows from
	$$({\rm id} \otimes \overline{\zeta_{\rmT(V)}^{-1}} - \zeta_{\rmT(V)} \otimes {\rm id})\circ \Delta^{(2)}(v_i)=0.$$
	
	The even part can be proved using similar arguments. 
	
	Proof of (3). Since $\Sym(V)$ is generated by $\overline{v_i}$ with odd degree, the last inclusions can be proved using a similar argument as well.
\end{proof}

\subsection{The theta maps}

Consider the linear maps defined on basis elements and extended linearly
\begin{align*}
\begin{array}{cccc} 
\Theta_{\rmS(V)}:&\rmS(V)&\rightarrow& \rmS(V),\\
& v_\alpha &\mapsto& \begin{cases} 
2^{\ell(\alpha)}v_\alpha& \text{if $\alpha$ is odd}\\
0 & \text{otherwise.}
\end{cases}
\end{array}
\end{align*}
\begin{align*}
\begin{array}{cccc} 
	\Theta_{\rmT(V)}:&\rmT(V)&\rightarrow& \rmT(V),\\
	& v_\alpha &\mapsto& \begin{cases} 
		2^{\ell(\alpha)}v_\alpha& \text{if $\alpha$ is odd}\\
		0 & \text{otherwise.}
	\end{cases}
\end{array}
\end{align*}
\begin{align*}
\begin{array}{cccc} 
	\Theta_{\Sym(V)}:&\Sym(V)&\rightarrow& \Sym(V),\\
	& \overline{v_\alpha} &\mapsto& \begin{cases} 
		2^{\ell(\alpha)}\overline{v_\alpha}& \text{if $\alpha$ is odd}\\
		0 & \text{otherwise.}
	\end{cases}
\end{array}
\end{align*}

It is easy to see that the images of $\Theta_{\rmS(V)}$, $\Theta_{\rmT(V)}$ and $\Theta_{\Sym(V)}$ are $\rmS(\rmO(V))$, $\rmT(\rmO(V))$ and $\Sym(\rmO(V))$, respectively.

Next, we show that $\Theta_{\rmS(V)}$, $\Theta_{\rmT(V)}$ and $\Theta_{\Sym(V)}$ are theta maps for $(\rmS(V),\zeta_{\rmS(V)})$, $(\rmS(V),\zeta_{\rmT(V)})$ and $(\rmS(V),\zeta_{\Sym(V)})$, respectively. As a consequence, $\rmS(\rmO(V))$, $\rmT(\rmO(V))$ and $\Sym(\rmO(V))$ are peak algebras of $\rmS(V)$, $\rmT(V)$ and $\Sym(V)$, respectively.

\begin{theorem}\label{thm:odd-shuffle}
	The map $\Theta_{\rmS(V)}$ is a graded Hopf morphism and moreover, the following diagram commutes. 
	$$
	\begin{tikzpicture}
	\node(TV) at (0,0){$\rmS(V)$};
	\node(Sym) at (4,0){$\QSym$};
	\node(OV) at (0,-2){$\rmS(\rmO(V))$};
	\node(Omega) at (4,-2){$\Pi$};
	
	\draw[thick,->]  (TV)->(Sym);
	\draw[thick,->]  (OV)->(Omega);
	\draw[thick,->]   (TV)->(OV);
	\draw[thick,->]   (Sym)->(Omega);
	
	\node(Phi) at (2,0.2){$\Phi_{\rmS(V)}$};
	\node(Phi) at (2,-1.8){$\Phi_{\rmS(V)}$};
	\node(thetaV) at (-0.5,-1){$\Theta_{\rmS(V)}$};
	\node(thetaV) at (4.7,-1){$\Theta_\QSym$};
	\end{tikzpicture} 
	$$
	So the map $\Theta_{\rmS(V)}$ is a theta map for $(\rmS(V),\zeta_{\rmS(V)})$.
\end{theorem}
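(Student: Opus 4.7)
The plan is to verify the two claims of the theorem separately and directly from the definitions, exploiting how nicely odd/even is preserved by shuffle and deconcatenation. First I would check that $\Theta_{\rmS(V)}$ is a graded Hopf morphism. Grading is immediate because $\Theta_{\rmS(V)}$ just rescales a basis element or kills it. For the product, I would fix two basis words $v_\alpha, v_\beta$ and consider two cases. If both $\alpha$ and $\beta$ are odd, then every shuffle $\gamma\in\alpha\shuffle\beta$ is odd and satisfies $\ell(\gamma)=\ell(\alpha)+\ell(\beta)$, so
\[
\Theta_{\rmS(V)}(v_\alpha\cdot v_\beta)=\sum_{\gamma\in\alpha\shuffle\beta}2^{\ell(\gamma)}v_\gamma=2^{\ell(\alpha)+\ell(\beta)}(v_\alpha\cdot v_\beta)=\Theta_{\rmS(V)}(v_\alpha)\cdot\Theta_{\rmS(V)}(v_\beta).
\]
If instead one of them, say $\alpha$, has an even part, then every $\gamma\in\alpha\shuffle\beta$ also has that even part, so $\Theta_{\rmS(V)}(v_\alpha\cdot v_\beta)=0=\Theta_{\rmS(V)}(v_\alpha)\cdot\Theta_{\rmS(V)}(v_\beta)$.

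For the coproduct, I would apply the same split. If $\alpha$ is odd, then for every $i$ both prefix $(\alpha_1,\ldots,\alpha_i)$ and suffix $(\alpha_{i+1},\ldots,\alpha_\ell)$ are odd, and $2^i\cdot 2^{\ell-i}=2^{\ell(\alpha)}$, so $(\Theta_{\rmS(V)}\otimes\Theta_{\rmS(V)})\circ\Delta(v_\alpha)=2^{\ell(\alpha)}\Delta(v_\alpha)=\Delta\circ\Theta_{\rmS(V)}(v_\alpha)$. If $\alpha$ has an even part, then every deconcatenation factor containing that part is killed by $\Theta_{\rmS(V)}$, so both $(\Theta_{\rmS(V)}\otimes\Theta_{\rmS(V)})\circ\Delta(v_\alpha)$ and $\Delta\circ\Theta_{\rmS(V)}(v_\alpha)$ vanish. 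Compatibility with the unit and counit is routine.

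Then I would prove that the diagram commutes by chasing a basis element $v_\alpha$. Set $\beta=(\deg(v_{\alpha_1}),\ldots,\deg(v_{\alpha_\ell}))$, so that $\Phi_{\rmS(V)}(v_\alpha)=S_\beta$ by definition. The condition that $v_\alpha$ is odd is exactly the condition that $\beta$ is an odd composition, and $\ell(\beta)=\ell(\alpha)$. Applying Theorem \ref{thm:theta}, I get
\[
\Theta_\QSym\circ\Phi_{\rmS(V)}(v_\alpha)=\Theta_\QSym(S_\beta)=\begin{cases}2^{\ell(\alpha)}S_\beta & \text{if $\alpha$ is odd}\\ 0 & \text{otherwise,}\end{cases}
\]
which equals $\Phi_{\rmS(V)}\bigl(\Theta_{\rmS(V)}(v_\alpha)\bigr)$ in either case. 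This gives the commuting square and therefore exhibits $\Theta_{\rmS(V)}$ as a theta map for $(\rmS(V),\zeta_{\rmS(V)})$; since its image is exactly $\rmS(\rmO(V))$, the lower horizontal arrow lands in $\Pi=\img(\Theta_\QSym)$ as indicated.

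There is no real obstacle here: the entire argument is a two-case check driven by the fact that "being odd" and "being even" are both preserved by the shuffle product and by deconcatenation, together with the already-established formula for $\Theta_\QSym$ on the shuffle basis. The only thing one needs to watch is the bookkeeping of the power of $2$, which works out because $\ell$ is additive on both operations.
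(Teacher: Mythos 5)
Your proof is correct and follows essentially the same route as the paper's: a two-case check of the algebra and coalgebra morphism properties using additivity of $\ell$ under shuffle and deconcatenation, followed by a basis-element chase through Theorem \ref{thm:theta} to verify the commuting square. The only difference is that you spell out slightly more explicitly why both sides vanish when an even-degree letter is present, which the paper leaves as a one-line remark.
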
 

\begin{proof}
	The map $\Theta_{\rmS(V)}$ is an algebra morphism since for odd compositions $\alpha$ and $\beta$,
	$$\Theta_{\rmS(V)}(v_\alpha)\Theta_{\rmS(V)}(v_\beta)=2^{\ell(\alpha)}v_\alpha\cdot2^{\ell(\beta)} v_\beta=2^{\ell(\alpha)+\ell(\beta)}\sum_{\gamma\in\alpha\shuffle\beta}v_{\gamma}=\Theta_{\rmS(V)}(v_\alpha\cdot v_\beta)$$
	and both sides vanish if at least one of $\alpha$ and $\beta$ is not odd.
	
	The map $\Theta_{\rmS(V)}$ is a coalgebra morphism since for odd composition $\alpha$,
	$$\Delta(\Theta_{\rmS(V)}(v_\alpha))=\Delta\left(2^{\ell(\alpha)}v_\alpha\right)=2^{\ell(\alpha)}\sum_{\beta\gamma=\alpha}v_{\beta}\otimes v_{\gamma}=\left(\Theta_{\rmS(V)}\otimes\Theta_{\rmS(V)}\right)\circ\Delta(v_\alpha)$$
	and both sides vanish if $\alpha$ is not odd.
	
	By Theorem \ref{thm:theta}, we have
	$$\Theta_\QSym\circ \Phi_{\rmS(V)}(v_\alpha)=\Theta_\QSym(S_\alpha)=\left\{\begin{array}{cc}
	2^{\ell(\alpha)}S_\alpha & \text{if }\alpha \text{ is odd}\\
	0& \text{otherwise.}
	\end{array}\right.$$
	On the other hand,
	$$\Phi_{\rmS(V)}\circ\Theta_{\rmS(V)}(v_\alpha)=\left\{\begin{array}{cc}
	\Phi_{\rmS(V)}\left(2^{\ell(\alpha)}v_\alpha\right) & \text{if }\alpha \text{ is odd}\\
	0& \text{otherwise}
	\end{array}\right.=\left\{\begin{array}{cc}
	2^{\ell(\alpha)}S_\alpha & \text{if }\alpha \text{ is odd}\\
	0& \text{otherwise.}
	\end{array}\right.$$
	Hence, the diagram commutes.
\end{proof}

\begin{theorem}\label{thm:odd-tensor}
	The map $\Theta_{\rmT(V)}$ is a graded Hopf morphism and moreover, the following diagram commutes. 
	$$
	\begin{tikzpicture}
	\node(TV) at (0,0){$\rmT(V)$};
	\node(Sym) at (4,0){$\Sym$};
	\node(OV) at (0,-2){$\rmT(\rmO(V))$};
	\node(Omega) at (4,-2){$\Omega$};
	
	\draw[thick,->]  (TV)->(Sym);
	\draw[thick,->]  (OV)->(Omega);
	\draw[thick,->]   (TV)->(OV);
	\draw[thick,->]   (Sym)->(Omega);
	
	\node(Phi) at (2,0.2){$\Phi_{\rmT(V)}$};
	\node(Phi) at (2,-1.8){$\Phi_{\rmT(V)}$};
	\node(thetaV) at (-0.55,-1){$\Theta_{\rmT(V)}$};
	\node(thetaV) at (4.6,-1){$\Theta_\Sym$};
	\end{tikzpicture} 
	$$
	So the map $\Theta_{\rmT(V)}$ is a theta map for $(\rmT(V),\zeta_{\rmT(V)})$.
\end{theorem}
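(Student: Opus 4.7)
The plan is to mirror, almost verbatim, the proof of Theorem \ref{thm:odd-shuffle}, with three adaptations: replace the shuffle product by concatenation, replace the deconcatenation coproduct by the deshuffle coproduct, and use the action of $\Theta_\Sym$ on scaled power sums in place of $\Theta_\QSym$ on the shuffle basis.

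First I would check that $\Theta_{\rmT(V)}$ is an algebra morphism. Since $v_\alpha \cdot v_\beta = v_{\alpha\beta}$ in $\rmT(V)$, and since $\alpha\beta$ is odd if and only if both $\alpha$ and $\beta$ are odd, with $\ell(\alpha\beta)=\ell(\alpha)+\ell(\beta)$, the identity $\Theta_{\rmT(V)}(v_\alpha v_\beta)=\Theta_{\rmT(V)}(v_\alpha)\,\Theta_{\rmT(V)}(v_\beta)$ falls out immediately on both the odd and non-odd branches. Next, I would check compatibility with the deshuffle coproduct $\Delta(v_\alpha)=\sum_{J\subseteq[\ell]}v_{\alpha_J}\otimes v_{\alpha_{[\ell]\setminus J}}$. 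When $\alpha$ is odd, every subword $\alpha_J$ is again odd and the lengths satisfy $|J|+(\ell-|J|)=\ell(\alpha)$, so the common scalar $2^{\ell(\alpha)}$ appears on both sides. When $\alpha$ contains a letter $v_{\alpha_k}$ of even degree, then for every $J\subseteq[\ell]$ the index $k$ lies in $J$ or in $[\ell]\setminus J$, so at least one tensor factor of every term is annihilated by $\Theta_{\rmT(V)}$, matching $\Theta_{\rmT(V)}(v_\alpha)=0$.

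For commutativity of the square, I would exploit that $\Phi_{\rmT(V)}$ is multiplicative with $v_i\mapsto p_{\deg(v_i)}$, so $\Phi_{\rmT(V)}(v_\alpha)=p_\beta$ where $\beta_i=\deg(v_{\alpha_i})$. Combining this with the multiplicativity of $\Theta_\Sym$ and the rule $\Theta_\Sym(p_n)=2p_n$ for odd $n$ and $0$ otherwise, both $\Theta_\Sym\circ\Phi_{\rmT(V)}(v_\alpha)$ and $\Phi_{\rmT(V)}\circ\Theta_{\rmT(V)}(v_\alpha)$ collapse to $2^{\ell(\alpha)}p_\beta$ when $\alpha$ is odd and to $0$ otherwise.

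I do not expect a real obstacle here: the whole argument rests on two elementary structural facts, namely that concatenation preserves oddness and that every subword of an odd composition is again odd. The only step worth stating carefully is the coproduct compatibility, where one needs the observation that the lengths of $\alpha_J$ and $\alpha_{[\ell]\setminus J}$ add to $\ell(\alpha)$ in order for the scalar $2^{\ell(\alpha)}$ to match on both sides.
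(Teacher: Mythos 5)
Your proposal is correct and follows essentially the same route as the paper's own proof: verify multiplicativity via concatenation, comultiplicativity via deshuffle (your observation that an even-degree letter lands in one tensor factor of every term is a slightly more explicit justification of the vanishing case than the paper gives), and commutativity of the square via multiplicativity of $\Phi_{\rmT(V)}$ together with $\Theta_\Sym(p_n)=2p_n$ for odd $n$ and $0$ otherwise. No gaps.
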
 

\begin{proof}
	The map $\Theta_{\rmT(V)}$ is an algebra morphism since for odd compositions $\alpha$ and $\beta$,
	$$\Theta_{\rmT(V)}(v_\alpha)\Theta_{\rmT(V)}(v_\beta)=2^{\ell(\alpha)}v_\alpha\cdot2^{\ell(\beta)} v_\beta=2^{\ell(\alpha)+\ell(\beta)}v_{\alpha\beta}=\Theta_{\rmT(V)}(v_\alpha\cdot v_\beta)$$
	and both sides vanish if at least one of $\alpha$ and $\beta$ is not odd.
	
	The map $\Theta_{\rmT(V)}$ is a coalgebra morphism since for odd composition $\alpha$,
	$$\Delta(\Theta_{\rmT(V)}(v_\alpha))=\Delta\left(2^{\ell(\alpha)}v_\alpha\right)=2^{\ell(\alpha)}\sum_{J\subseteq[\ell(\alpha)]}v_{\alpha_J}\otimes v_{\alpha_{[\ell(\alpha)]\setminus J}}=\left(\Theta_{\rmT(V)}\otimes\Theta_{\rmT(V)}\right)\circ\Delta(v_\alpha)$$
	and both sides vanish if $\alpha$ is not odd.
	
	We have that
	$$\Theta_\Sym\circ \Phi_{\rmT(V)}(v_\alpha)=\Theta_\Sym(p_\alpha)=\left\{\begin{array}{cc}
	2^{\ell(\alpha)}p_\alpha & \text{if }\alpha \text{ is odd}\\
	0& \text{otherwise.}
	\end{array}\right.$$
	On the other hand,
	$$\Phi_{\rmT(V)}\circ\Theta_{\rmT(V)}(v_\alpha)=\left\{\begin{array}{cc}
	\Phi_{\rmT(V)}\left(2^{\ell(\alpha)}v_\alpha\right) & \text{if }\alpha \text{ is odd}\\
	0& \text{otherwise}
	\end{array}\right.=\left\{\begin{array}{cc}
	2^{\ell(\alpha)}p_\alpha & \text{if }\alpha \text{ is odd}\\
	0& \text{otherwise.}
	\end{array}\right.$$
	Hence, the diagram commutes.
\end{proof}

\begin{theorem}\label{thm:odd-symmetric}
	The map $\Theta_{\Sym(V)}$ is a graded Hopf morphism and moreover, the following diagram commutes. 
	$$
	\begin{tikzpicture}
	\node(TV) at (0,0){$\Sym(V)$};
	\node(Sym) at (4,0){$\Sym$};
	\node(OV) at (0,-2){$\Sym(\rmO(V))$};
	\node(Omega) at (4,-2){$\Omega$};
	
	\draw[thick,->]  (TV)->(Sym);
	\draw[thick,->]  (OV)->(Omega);
	\draw[thick,->]   (TV)->(OV);
	\draw[thick,->]   (Sym)->(Omega);
	
	\node(Phi) at (2,0.2){$\Phi_{\Sym(V)}$};
	\node(Phi) at (2,-1.8){$\Phi_{\Sym(V)}$};
	\node(thetaV) at (-0.75,-1){$\Theta_{\Sym(V)}$};
	\node(thetaV) at (4.55,-1){$\Theta_\Sym$};
	\end{tikzpicture} 
	$$
	and so the map $\Theta_{\Sym(V)}$ is a theta map for $(\Sym(V),\zeta_{\Sym(V)})$.
\end{theorem}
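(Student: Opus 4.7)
The plan is to reduce the theorem to Theorem \ref{thm:odd-tensor} by exhibiting $\Theta_{\Sym(V)}$ as the map induced by $\Theta_{\rmT(V)}$ under the canonical Hopf surjection $q:\rmT(V)\twoheadrightarrow \rmT(V)/\rmI(V)=\Sym(V)$. Since the Hopf, algebra, and character structures on $\Sym(V)$ were all defined as the induced ones from $\rmT(V)$, essentially every assertion in this theorem becomes formal once we verify that $\Theta_{\rmT(V)}$ respects the commutator ideal $\rmI(V)$.

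First I would check that $\Theta_{\rmT(V)}(\rmI(V))\subseteq \rmI(V)$. It suffices to evaluate $\Theta_{\rmT(V)}$ on a typical generator $v_iv_j-v_jv_i$: if either $\deg(v_i)$ or $\deg(v_j)$ is even then the composition $(i,j)$ (and $(j,i)$) fail to be odd and both terms are killed; if both degrees are odd then $\Theta_{\rmT(V)}(v_iv_j-v_jv_i)=4(v_iv_j-v_jv_i)\in \rmI(V)$. Hence $\Theta_{\rmT(V)}$ descends to a well-defined map $\widetilde\Theta:\Sym(V)\to\Sym(V)$, and on the basis $\{\overline{v_\alpha}\}$ it clearly agrees with $\Theta_{\Sym(V)}$ by construction. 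Because $\rmI(V)$ is a Hopf ideal and $\Theta_{\rmT(V)}$ is a graded Hopf morphism by Theorem \ref{thm:odd-tensor}, the induced map $\widetilde\Theta=\Theta_{\Sym(V)}$ is automatically a graded Hopf morphism as well; this bypasses re-checking multiplicativity and comultiplicativity on the $\overline{v_\alpha}$.

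For the commutativity of the square, I would use that $\rmI(V)\subseteq\ker\Phi_{\rmT(V)}$ (already noted in Section 4.3), which gives the factorization $\Phi_{\rmT(V)}=\Phi_{\Sym(V)}\circ q$. Composing the commutative square of Theorem \ref{thm:odd-tensor} with $q$ on the left then yields
\begin{align*}
\Phi_{\Sym(V)}\circ\Theta_{\Sym(V)}\circ q
&=\Phi_{\Sym(V)}\circ q\circ\Theta_{\rmT(V)}
=\Phi_{\rmT(V)}\circ\Theta_{\rmT(V)}\\
&=\Theta_\Sym\circ\Phi_{\rmT(V)}
=\Theta_\Sym\circ\Phi_{\Sym(V)}\circ q.
\end{align*}
Since $q$ is surjective we may cancel it on the right, giving $\Phi_{\Sym(V)}\circ\Theta_{\Sym(V)}=\Theta_\Sym\circ\Phi_{\Sym(V)}$, which is the desired commutative square. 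There is no real obstacle here: the only non-formal input is the preservation of $\rmI(V)$ by $\Theta_{\rmT(V)}$, which is the short case analysis above; alternatively, one could mimic the direct proof of Theorem \ref{thm:odd-tensor} using the fact that $\Sym(V)$ is commutatively freely generated by the primitive elements $\overline{v_i}$, but the quotient argument seems to me to be the cleanest.
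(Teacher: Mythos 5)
Your argument is correct, but it takes a genuinely different route from the paper. The paper's proof of this theorem is a one-line appeal to ``the same arguments as in the proof of Theorem \ref{thm:odd-tensor}'': that is, it re-runs the direct verification on the spanning set $\{\overline{v_\alpha}\}$ --- multiplicativity, comultiplicativity, and the comparison of $\Phi_{\Sym(V)}\circ\Theta_{\Sym(V)}$ with $\Theta_\Sym\circ\Phi_{\Sym(V)}$ on each $\overline{v_\alpha}$. You instead descend Theorem \ref{thm:odd-tensor} through the quotient $q:\rmT(V)\twoheadrightarrow\Sym(V)$, and the only non-formal input is that $\Theta_{\rmT(V)}$ preserves the commutator ideal; your check on the generators $v_iv_j-v_jv_i$ suffices because $\Theta_{\rmT(V)}$ is already known to be an algebra morphism, so it carries $v_\alpha(v_iv_j-v_jv_i)v_\beta$ into the ideal as well. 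Your factorization $\Phi_{\rmT(V)}=\Phi_{\Sym(V)}\circ q$ and the cancellation of the surjection $q$ are both legitimate. What your approach buys, beyond economy, is that well-definedness of $\Theta_{\Sym(V)}$ comes for free: the paper defines $\Theta_{\Sym(V)}$ on the classes $\overline{v_\alpha}$, which are not linearly independent (they only depend on $\alpha$ up to reordering of parts), so strictly one should observe that the recipe is permutation-invariant; realizing the map as the one induced by $\Theta_{\rmT(V)}$ on the quotient settles this automatically. What the paper's direct route buys is independence from Theorem \ref{thm:odd-tensor} and uniformity with the shuffle-algebra case. Both proofs are valid.
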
 

\begin{proof}
	The same arguments as in the proof of Theorem \ref{thm:odd-tensor} can be applied.
\end{proof}

\begin{theorem}\label{thm:adjoint}
	The maps $\Theta_{\rmS(V)}$ and $\Theta_{\rmT(V)}$ are adjoint maps of each other. Additionally, the map $\Theta_{\Sym(V)}$ is self-adjoint.
\end{theorem}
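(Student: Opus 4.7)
The plan is to verify the adjointness relations directly on basis elements, exploiting the fact that all three theta maps act diagonally on the natural monomial bases, and that the relevant Hopf pairings (from Corollary \ref{coro:duality} and Theorem \ref{thm:duality}) are themselves supported on matching indices.

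For the first statement, recall that $\rmS(V)$ and $\rmT(V)$ are graded dual under $\langle v_\alpha, v_\beta\rangle=\delta_{\alpha,\beta}$. I would simply compute, for arbitrary compositions $\alpha,\beta$,
\[
\langle \Theta_{\rmS(V)}(v_\alpha),\,v_\beta\rangle \quad\text{and}\quad \langle v_\alpha,\,\Theta_{\rmT(V)}(v_\beta)\rangle.
\]
The first equals $2^{\ell(\alpha)}\delta_{\alpha,\beta}$ when $\alpha$ is odd and $0$ otherwise; the second equals $2^{\ell(\beta)}\delta_{\alpha,\beta}$ when $\beta$ is odd and $0$ otherwise. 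The Kronecker delta forces $\alpha=\beta$ whenever either expression is nonzero, so the conditions ``$\alpha$ odd'' and ``$\beta$ odd'' coincide, as do $\ell(\alpha)$ and $\ell(\beta)$. Hence the two pairings agree term-by-term, which is exactly the adjointness $\Theta_{\rmS(V)}^*=\Theta_{\rmT(V)}$.

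For the second statement, the argument is the same in spirit but now one must cope with the fact that the pairing on $\Sym(V)$ is diagonal only up to rearrangement of parts. I would write
\[
\langle \overline{v_\alpha},\overline{v_\beta}\rangle = \frac{z_\alpha}{(\deg(v_\alpha)!)^2}\,\delta_{\sort(\alpha),\sort(\beta)},
\]
which follows from Theorem \ref{thm:duality} after noticing that $z_\alpha=z_\beta$ and $\deg(v_\alpha)!=\deg(v_\beta)!$ whenever $\sort(\alpha)=\sort(\beta)$. Both $\langle \Theta_{\Sym(V)}(\overline{v_\alpha}),\overline{v_\beta}\rangle$ and $\langle \overline{v_\alpha},\Theta_{\Sym(V)}(\overline{v_\beta})\rangle$ vanish unless $\sort(\alpha)=\sort(\beta)$, and under that condition $\alpha$ is odd iff $\beta$ is odd and $\ell(\alpha)=\ell(\beta)$. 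The two prefactors $2^{\ell(\alpha)}$ and $2^{\ell(\beta)}$ therefore agree, giving self-adjointness.

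There is no real obstacle here: both results reduce to the observation that the data defining $\Theta$ (``is odd'' and ``length'') are invariants of the equivalence relation ($=$, respectively $\sort$) that controls the pairing. The only thing to be careful about is writing the pairing on $\Sym(V)$ in the rescaled form above so that one sees clearly that it kills off-diagonal contributions, after which the adjointness check reduces to an equality of scalar multiples of a single Kronecker-type symbol.
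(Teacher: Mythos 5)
Your proof is correct and takes essentially the same approach as the paper: both verify adjointness by evaluating the two pairings on basis elements and observing that the diagonal nature of $\Theta$ and of the pairing forces the matrix entries to coincide. The only difference is that you spell out the $\Sym(V)$ case (where the pairing is diagonal only up to $\sort$) in detail, whereas the paper dismisses it with ``similar argument works''; your observation that ``odd'' and ``length'' are invariants of $\sort$ is exactly the point that makes that case go through.
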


\begin{proof}
	By definition of $\Theta_{\rmS(V)}$ and $\Theta_{\rmT(V)}$ and the duality of $\rmS(V)$ and $\rmT(V)$, we have
	$$\langle\Theta_{\rmS(V)}(v_\alpha),v_\beta \rangle=\left\{\begin{array}{cc}
	2^{\ell(\alpha)} & \text{if }\alpha=\beta \text{ and }\alpha $\text{ is odd}$\\
	0 & \text{otherwise}
	\end{array}\right.$$
	and
	$$\langle v_\alpha,\Theta_{\rmT(V)}(v_\beta) \rangle=\left\{\begin{array}{cc}
	2^{\ell(\alpha)} & \text{if }\alpha=\beta \text{ and }\alpha $\text{ is odd}$\\
	0 & \text{otherwise.}
	\end{array}\right.$$
	Similar argument works for $\Theta_{\Sym(V)}$.
\end{proof}

\section{Examples of theta maps and peak algebras}\label{sec:perms}

\subsection{The classical cases}
Let $V$ be a vector space with a distinguished basis $\{v_1,v_2,\dots\}$ and consider the grading $\deg(v_i)=i$.

Then we have the following isomorphisms of graded Hopf algebras,
\begin{align*}
	\rmS(V)&\cong\QSym\\
	v_\alpha&\mapsto S_\alpha,\\
	\rmT(V)&\cong\NSym\\
	v_\alpha&\mapsto S_\alpha^*,\\
	\Sym(V)&\cong\Sym\\
	\overline{v_\alpha}&\mapsto p_\alpha.
\end{align*}

Via these isomorphisms, $\Theta_{\rmS(V)}$ and $\Theta_{\Sym(V)}$ can be identified as the maps $\Theta_\QSym$ and $\Theta_\Sym$, respectively. Additionally, the Hopf subalgebras of odd elements $\rmS(\rmO(V))$ and $\Sym(\rmO(V))$ can be identified as $\Pi$ and $\Omega$, respectively.

On the other hand, the theta map $\Theta_{\rmT(V)}$ can be identified as the map $\Theta_\NSym:\NSym\to\NSym$ that

\begin{align*}
	S_\alpha^* \mapsto \left\{\begin{array}{cc}
		2 S_\alpha^*&\text{ if }\alpha\text{ is odd}\\
		0&\text{otherwise.}
	\end{array}\right.
\end{align*}

The map $\Theta_\NSym$ is also defined in \cite{N03,S05} in the group algebra $\mathbb{C}\mathfrak{S}$ using descent and peak sets. The equivalence can be deduced from the fact that $\Theta_\NSym$ is the adjoint map of $\Theta_\QSym$.

In particular, we recover the following results

\begin{corollary}
	The Hopf algebras $\Pi$ and $\Omega$ are shuffle algebra and symmetric algebra, respectively. Hence, $\Omega$ is self-dual. Moreover, $\img(\Theta_\NSym)\cong\Pi^*$.
\end{corollary}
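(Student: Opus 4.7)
The plan is to read off all three claims by transporting the structural results we have already established for $\rmS(V)$, $\rmT(V)$, and $\Sym(V)$ across the Hopf isomorphisms $\rmS(V)\cong\QSym$, $\rmT(V)\cong\NSym$, $\Sym(V)\cong\Sym$ given in this subsection, and then tracking the image of the theta maps.

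First I would show that $\Pi$ is a shuffle algebra. Under the isomorphism $\rmS(V)\cong\QSym$ sending $v_\alpha$ to $S_\alpha$, the map $\Theta_{\rmS(V)}$ corresponds to $\Theta_\QSym$ by Theorem \ref{thm:odd-shuffle} together with Theorem \ref{thm:theta}. Consequently $\img(\Theta_\QSym)=\Pi$ is identified with $\img(\Theta_{\rmS(V)})=\rmS(\rmO(V))$. But $\rmS(\rmO(V))$ is, by construction, the shuffle algebra on the odd-degree basis elements of $V$, so $\Pi$ inherits a shuffle-algebra structure whose basis is $\{S_\alpha:\alpha \text{ odd}\}$.

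For $\Omega$, I would run the same argument using the isomorphism $\Sym(V)\cong\Sym$ sending $\overline{v_\alpha}$ to $p_\alpha$. By Theorem \ref{thm:odd-symmetric}, this isomorphism identifies $\Theta_{\Sym(V)}$ with $\Theta_\Sym$, hence $\Omega$ with $\Sym(\rmO(V))$. Since $\Sym(\rmO(V))$ is a symmetric algebra by construction, so is $\Omega$. The self-duality of $\Omega$ is then immediate from Corollary \ref{coro:duality}, which provides an explicit self-pairing on $\Sym(\rmO(V))$ arising from the pairing on $\Sym(V)$ in Theorem \ref{thm:duality}.

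Finally, for the identification $\img(\Theta_\NSym)\cong\Pi^*$, I would use the isomorphism $\rmT(V)\cong\NSym$ sending $v_\alpha$ to $S_\alpha^*$, under which $\Theta_{\rmT(V)}$ corresponds to the $\Theta_\NSym$ defined in the paragraph preceding the corollary. Hence $\img(\Theta_\NSym)$ is identified with $\rmT(\rmO(V))$. By Corollary \ref{coro:duality}, $\rmT(\rmO(V))$ and $\rmS(\rmO(V))$ are graded dual to each other under the pairing $\langle v_\alpha,v_\beta\rangle=\delta_{\alpha,\beta}$, so combining with the identification $\Pi\cong\rmS(\rmO(V))$ from the first paragraph yields $\img(\Theta_\NSym)\cong\Pi^*$. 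There is no real obstacle here; everything is a matter of bookkeeping across the three isomorphisms and quoting Theorems \ref{thm:odd-shuffle}, \ref{thm:odd-tensor}, \ref{thm:odd-symmetric} and Corollary \ref{coro:duality}, so the only thing to be careful about is that each identification respects the grading and the Hopf structure, which has already been verified in these results.
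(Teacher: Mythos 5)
Your proposal is correct and follows essentially the same route as the paper, which states this corollary as an immediate consequence of the identifications $\rmS(V)\cong\QSym$, $\rmT(V)\cong\NSym$, $\Sym(V)\cong\Sym$ under the grading $\deg(v_i)=i$ and of Theorems \ref{thm:odd-shuffle}, \ref{thm:odd-tensor}, \ref{thm:odd-symmetric} and Corollary \ref{coro:duality}. Your write-up simply makes explicit the bookkeeping that the paper leaves implicit.
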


The relations can be seen more clearly from the following commutative diagram. The rectangle on the left is dual to the rectangle on the right.

\begin{center}
	\begin{tikzpicture}
		\node(TV) at (0,0){$\Sym^*$};
		\node(Sym) at (4,0){$\Sym$};
		\node(OV) at (0,-2){$\Sym^*$};
		\node(Omega) at (4,-2){$\Sym$};
		\node(QSym1) at (8,0){$\QSym$};
		\node(QSym2) at (8,-2){$\QSym$};
		\node(NSym1) at (-4,0){$\NSym$};
		\node(NSym2) at (-4,-2){$\NSym$};
		
		\draw[thick,<->]  (TV)->(Sym);
		\draw[thick,<->]  (OV)->(Omega);
		\draw[thick,->]   (TV)->(OV);
		\draw[thick,->]   (Sym)->(Omega);
		\draw[thick,->]   (Sym)->(QSym1);
		\draw[thick,->]   (Omega)->(QSym2);
		\draw[thick,->]   (QSym1)->(QSym2);
		\draw[thick,->>]   (NSym1)->(TV);
		\draw[thick,->>]   (NSym2)->(OV);
		\draw[thick,->]   (NSym1)->(NSym2);
		
		\node(Phi) at (2,0.2){$\cong$};
		\node(Phi) at (2,-1.8){$\cong$};
		\node(thetaV) at (-0.5,-1){$\Theta_\Sym^*$};
		\node(thetaV) at (4.55,-1){$\Theta_\Sym$};
		\node at (-2,0.2) {$I^{-1}\circ\pi$};
		\node at (-2,-1.8) {$I^{-1}\circ\pi$};
		\node at (6,0.2) {$\iota$};
		\node at (6,-1.8) {$\iota$};
		\node(thetaV) at (-4.7,-1){$\Theta_\NSym$};
		\node(thetaV) at (8.7,-1){$\Theta_\QSym$};
	\end{tikzpicture}
\end{center}

\subsection{The Hopf algebra $\mathcal{V}$}

As is shown in \cite{A05}, the associated graded Hopf algebra for any graded cofree coalgebra is a shuffle algebra. In this subsection we present a co-commutative Hopf structure, $\mathcal{V}$, on permutations that is the associated graded Hopf algebra to the Malvenuto–Reutenauer Hopf algebra. This Hopf algebra is isomorphic to Grossman-Larson Hopf algebra \cite{G89} and also appears in \cite{L21,V14}

Fix a permutation $\sigma\in\mathfrak{S}_n$, its global descent set $\rm{GD}(\sigma)$ is defined to be $\{i:\sigma(a)>\sigma(b)\text{ for all }a\leq i<b\}$. For example, $\rm{GD}(5763421)=\{3,5,6\}$. The shifted concatenation of two permutations $\sigma_1\cdots\sigma_n$ and $\tau_1\cdots\tau_m$ is defined as $\sigma\odot\tau=(\sigma_1+m)\cdots(\sigma_n+m)\tau_1\cdots\tau_m$. For example, $132\odot 3421=5763421$. For each permutation $\sigma$, there is a unique decomposition
$$\sigma=\sigma^1\odot\sigma^2\odot\cdots\odot\sigma^\ell$$
such that $\sigma^i$ are permutations and $\rm{GD}(\sigma^i)=\emptyset$. For example, $5763421=132\odot12\odot1\odot1$. In this subsection, we view a permutation as a word whose letters are permutations without global descent. The degree of a permutation $\sigma$, denoted by $\deg(\sigma)$, is $n$ if $\sigma\in\fS_n$.

As a graded vector space, $\mathcal{V}=\displaystyle\bigoplus_{n\geq 0}\mathcal{V}_n$ where $\mathcal{V}_n=\mathbb{C}$-span$\{\sigma\in\mathfrak{S}_n\}$. For convenience, $\fS_0$ contains the unique empty permutation $\emptyset$. The product is given by shuffle of permutations
$$\sigma\cdot\tau=\sum_{\eta\in\sigma\shuffle\tau}\eta,$$
and the coproduct is given by deconcatenation
$$\Delta(\sigma)=\sum_{\tau\odot\eta=\sigma}\tau\otimes\eta.$$
For example,
\begin{align*}
	{\color{red}132\odot 12}\cdot{\color{blue}12\odot 1}&={\color{red}132}\odot{\color{red}12}\odot{\color{blue}12}\odot{\color{blue}1}+{\color{red}132}\odot{\color{blue}12}\odot{\color{red}12}\odot{\color{blue}1}+{\color{red}132}\odot{\color{blue}12}\odot{\color{blue}1}\odot{\color{red}12}\\
	&+{\color{blue}12}\odot{\color{red}132}\odot{\color{red}12}\odot{\color{blue}1}+{\color{blue}12}\odot{\color{red}132}\odot{\color{blue}1}\odot{\color{red}12}+{\color{blue}12}\odot{\color{blue}1}\odot{\color{red}132}\odot{\color{red}12},
\end{align*}
$$\Delta(132\odot 12\odot 1)=\emptyset\otimes 132\odot 12\odot 1+132\otimes 12\odot 1+132\odot 12\otimes 1+132\odot 12\odot 1\otimes\emptyset.$$
With this Hopf structure, $\cV$ is a shuffle algebra. The linear map defined on basis elements
$$\begin{array}{cccc}
	\Phi_{\cV}:&\cV&\rightarrow& \QSym\\
	&\sigma^1\odot\sigma^2\odot\cdots\odot\sigma^\ell& \mapsto & S_{(\deg(\sigma^1),\dots,\deg(\sigma^\ell))}
\end{array}$$
makes $\cV$ a combinatorial Hopf algebra. Then, Theorem \ref{thm:odd-shuffle} describes a theta map for $\cV$ as follows.

\begin{corollary}
The linear map defined on basis elements as
$$
\begin{array}{cccc} 
	\Theta_\cV:&\cV&\rightarrow& \cV,\\
	& \sigma^1\odot\sigma^2\odot\cdots\odot\sigma^\ell &\mapsto& \begin{cases} 
		2^\ell (\sigma^1\odot\sigma^2\odot\cdots\odot\sigma^\ell) & \text{if $\deg(\sigma^i)$ is odd for all $i$}\\
		0 & \text{otherwise.}
	\end{cases}
\end{array}
$$
is a theta map for $\cV$.
\end{corollary}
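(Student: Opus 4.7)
The plan is to identify $\cV$ with a shuffle algebra $\rmS(V)$ on a suitable graded vector space $V$, and then invoke Theorem \ref{thm:odd-shuffle} directly. Concretely, let $V$ be the vector space with distinguished basis indexed by permutations $\pi$ satisfying $\mathrm{GD}(\pi)=\emptyset$, graded so that a permutation $\pi\in\fS_n$ with no global descent has degree $n$. The unique global-descent decomposition $\sigma=\sigma^1\odot\sigma^2\odot\cdots\odot\sigma^\ell$ presents $\cV$ as the vector space of ``words'' in these basis letters.

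First I would verify that the linear map
$$\Psi:\rmS(V)\to\cV,\qquad v_{(\pi_1,\ldots,\pi_\ell)}\mapsto \pi_1\odot\pi_2\odot\cdots\odot\pi_\ell$$
is an isomorphism of graded Hopf algebras. Bijectivity on basis elements is the uniqueness of the $\odot$-decomposition, and the grading matches because $\deg(\sigma^1\odot\cdots\odot\sigma^\ell)=\sum_i\deg(\sigma^i)$. Compatibility with products follows from the fact that the shuffle product on $\cV$ is defined by shuffling the $\odot$-factors in exactly the same combinatorial manner as the shuffle product on words of letters in $\rmS(V)$. Compatibility with coproducts follows because deconcatenation of a word $v_{(\pi_1,\ldots,\pi_\ell)}$ corresponds precisely to splitting $\sigma=\sigma^1\odot\cdots\odot\sigma^\ell$ at a $\odot$, which is the coproduct formula stated for $\cV$.

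Next I would check that $\Phi_\cV=\Phi_{\rmS(V)}\circ\Psi^{-1}$: both maps send $\sigma^1\odot\cdots\odot\sigma^\ell$ to $S_{(\deg(\sigma^1),\ldots,\deg(\sigma^\ell))}$, which is immediate from the definitions. Consequently $\zeta_\cV:=\zeta_\QSym\circ\Phi_\cV$ matches $\zeta_{\rmS(V)}$ under $\Psi$, so $\Psi$ is an isomorphism of combinatorial Hopf algebras.

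Finally, under $\Psi$ the description of $\Theta_{\rmS(V)}$ on the word basis $\{v_\alpha\}$ translates verbatim into the stated formula for $\Theta_\cV$: the word $v_{(\pi_1,\ldots,\pi_\ell)}$ is ``odd'' in the sense of Section 4.4 precisely when each $\deg(\pi_i)$ is odd, in which case it is scaled by $2^\ell$ and otherwise sent to zero. Invoking Theorem \ref{thm:odd-shuffle} then gives at once that $\Theta_\cV$ is a theta map for $(\cV,\zeta_\cV)$. The only real point to watch is the conceptual identification of the ``letters'' of the shuffle algebra as permutations with no global descent rather than as positive integers (as in the classical $\QSym$ case); once that identification is in place, nothing more than Theorem \ref{thm:odd-shuffle} is needed.
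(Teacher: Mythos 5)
Your proposal is correct and is essentially the paper's own argument: the paper likewise observes that $\cV$ is the shuffle algebra on the graded vector space spanned by permutations with empty global descent set (via the unique $\odot$-decomposition), notes that $\Phi_\cV$ is the corresponding map to the shuffle basis of $\QSym$, and then cites Theorem \ref{thm:odd-shuffle}. The extra verifications you spell out (bijectivity of the identification, matching of product, coproduct, grading, and characters) are exactly the implicit content of the paper's phrase ``with this Hopf structure, $\cV$ is a shuffle algebra.''
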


The peak algebra for $\mathcal{V}$ is given by $\Theta_{\mathcal{V}}(\mathcal{V})=\mathbb{C}$-span$\{\sigma^1\odot\cdots\odot\sigma^\ell:\rm{GD}(\sigma^i)=\emptyset,\deg(\sigma^i)\text{ are odd}\}$. Its Hilbert series is given by
$$\rm{Hilb}(\Theta_{\mathcal{V}}(\mathcal{V}))=1+x+x^2+4x^3+7x^4+81x^5+164x^6+\cdots.$$

\subsection{The Hopf algebra $\NCSym$}

The Hopf algebra of symmetric functions in noncommuting variables ${\rm NCSym}=\bigoplus_{n\geq 0}{\rm NCSym}_n$ are originally defined in \cite{W36}. For more information about what algebraic structures of ${\rm NCSym}$ has been revealed  see \cite[Introduction]{A21}. 

A set partition of $[n]$ is a set of disjoint non-empty subsets of $[n]$ whose union is $[n]$.  A set partition of $[n]$ is said to be atomic if there exists $i$ such that for any $a\leq i<b$, $a$ and $b$ are not in the same subset. The bases of ${\rm NCSym}_n$ are indexed by set partitions of $[n]$. It is known that $\NCSym$ is free as an algebra, whose generators are indexed by atomic set partitions. Lauve and Mastnak \cite{L11} discovered that there is a set of primitive elements $\{\mathbf{p}_\pi:\pi \text{~atomic}\}$ that freely generates ${\rm NCSym}$, in particular, $\NCSym$ is a tensor algebra.

The degree of $\pi$ is $n$ if $\pi$ is a set partition of $[n]$. As a consequence of Theorem \ref{thm:odd-tensor}, the linear map defined on basis elements as
$$\begin{array}{cccc}
\Theta_{\NCSym}: & \NCSym& \rightarrow & {\rm NCPeak}\\
& \mathbf{p}_{\pi_1}\cdots\mathbf{p}_{\pi_\ell} & \mapsto & \begin{cases}
2 \mathbf{p}_{\pi_1}\cdots\mathbf{p}_{\pi_\ell} & \deg(\pi_i)\text{ are odd},\\
0 & \text{otherwise.}
\end{cases}  \end{array}$$
is a theta map for $\NCSym$.

The peak algebra for $\NCSym$ is given by $\Theta_\NCSym(\NCSym)=\mathbb{C}$-span$\{\mathbf{p}_{\pi_1}\cdots\mathbf{p}_{\pi_\ell}:\deg(\pi_i)\text{ are odd}\}$. Its Hilbert series is given by
$$\rm{Hilb}(\Theta_{\NCSym}(\NCSym))=1+x+x^2+3x^3+5x^4+29x^5+57x^6+\cdots.$$


\end{document}